\documentclass[twoside,centertags]{amsart}
\usepackage[textwidth=3cm, textsize=small, colorinlistoftodos]{todonotes}
\usepackage{comment}

\usepackage{fancyhdr}           
\fancypagestyle{plain}{
    \lhead{}
    \fancyhead[R]{\thepage}
    \fancyhead[L]{}
    
    \fancyfoot{}
}
\usepackage{comment}

\usepackage[cmtip,color,all]{xy}
\usepackage{amsmath}
\usepackage{amsfonts}
\usepackage{amssymb}
\usepackage{amsthm}
\usepackage{graphicx}
\usepackage{mathrsfs}
\usepackage{hyperref}
\usepackage{tikz-cd}
\usepackage{xcolor}
\usepackage{bbold}
\usepackage{pifont}
\usepackage{mathtools}

\newtheorem{theorem}{Theorem}[subsection]

\newtheorem{proposition}[theorem]{Proposition}
\newtheorem*{theorem*}{Theorem}

\theoremstyle{definition}
\newtheorem{definition}[theorem]{Definition}

\theoremstyle{definition}

\usepackage{geometry}
\usepackage{setspace}
\usepackage{stmaryrd,Commons,xcolor,soul, tikz}
\usetikzlibrary{cd}
\usepackage{circuitikz}
\usepackage[style = alphabetic, doi = false, url=false, isbn = false]{biblatex}
\usepackage{hyperref}
\usepackage{xurl}
\hypersetup{breaklinks=true}
\renewbibmacro{in:}{}
\DeclareFieldFormat[article]{title}{\mkbibemph{#1}}
\DeclareFieldFormat[misc]{title}{\mkbibemph{#1}}
\DeclareFieldFormat{journaltitle}{#1}
\definecolor{winered}{rgb}{0.5,0,0}
 \usepackage{hyperref}
 \hypersetup{citecolor = winered, linkcolor = winered, menucolor = winered, colorlinks = true}

\allowdisplaybreaks
\graphicspath{{./images/}}

\newcommand{\C}{\mathcal{C}}

\newcommand{\s}{\mathsf{Set}}

\newcommand{\dgC}{\mathsf{dgCoalg}}
\newcommand{\dgA}{\mathsf{dgAlg}}
\newcommand{\Og}{\mathcal{O}_G}
\newcommand{\Cobar}{\operatorname{Cobar}}
\newcommand{\Barr}{\operatorname{Bar}}

\makeatletter
\newcommand{\colim@}[2]{%
  \vtop{\m@th\ialign{##\cr
    \hfil$#1\operator@font colim$\hfil\cr
    \noalign{\nointerlineskip\kern1.5\ex@}#2\cr
    \noalign{\nointerlineskip\kern-\ex@}\cr}}%
}
\newcommand{\colim}{%
  \mathop{\mathpalette\colim@{\rightarrowfill@\scriptscriptstyle}}\nmlimits@
}
\renewcommand{\varprojlim}{%
  \mathop{\mathpalette\varlim@{\leftarrowfill@\scriptscriptstyle}}\nmlimits@
}
\renewcommand{\varinjlim}{%
  \mathop{\mathpalette\varlim@{\rightarrowfill@\scriptscriptstyle}}\nmlimits@
}
\makeatother

\theoremstyle{definition}

\title{Koszul Duality in an Equivariant Context}

\author{Sofía Martínez Alberga}
\address{Instituto Tecnológico Autónomo de México, 
Río Hondo 1, Altavista, Álvaro Obregón, 01080 Ciudad de México, CDMX, Mexico}
\email{sofia.martinez.alberga@itam.mx}

\begin{document}
\singlespacing
\maketitle
\begin{abstract}{This article takes steps to show that an equivariant version of the well known phenomena of Koszul Duality can be formulated in a manner not considered prior: via dg (co)algebra valued functors on the orbit category. 
As such techniques used in this article mix traditional and modern categorical methods in homotopy theory.}
\end{abstract}

\section{Introduction}

In the eighties, Elmendorf proved in \cite{elm} that the category of topological spaces equipped with a group action is equivalent to the category of spaced valued presheaves on the orbit category of said group, where the group is as general as a compact Lie group.
Ever since then  there has been work to generalize this result for more generic categories. 
The first source of literature known to do this is an article of Stephan's \cite{stephan} in which they provide a criteria which guarantees that any cofibrantly generated model category satisfying some \emph{cellularity conditions}, can be made into an equivariant category, for discrete and topological groups. 
Using this criteria for a discrete group $G$, Bohmann et al. \cite{gcat} proved that a model structure could be put one the category of equivariant categories and Bergner \cite{bergner} showed that the known models for $(\infty, 1)$-categories can all be extended to equivariant versions.

The main goal of this text is to show a similar conclusion to that of Stephen's hold in the context of the categories involved in the phenonmena known as \emph{differential graded (dg) Koszul duality}, namely the categories of dg coalgebras and algebras.
Today this phenomenon is wide-reaching, but it was originally a duality theory was defined to generalize the notion of duality between symmetric and exterior algebras (associated to vector spaces) to the dg algebra setting \cite{beilinson1988koszul}. 
However, there have been many formulations of the notion in a variety of contexts, some include \cite{ginz},\cite{Kape} \cite{mirk}. 

While there are many instances of Koszul Duality, the formulation of relevance to this article is the one which relates dg algebras and dg coalgebras. 
First put into print by Positselski
\cite{posit}, the formulation manifests itself within the adjunction formed by the Cobar and Bar functors.
Classically, these functors provide a method to construct an algebra from a coalgebra and vice versa, respectively. 
More generally, these functors can be extended to the dg setting, hence functors now provide a construction that takes as input a dg coalgebra and outputs a dg algebra, and vice versa, respectively. 
This extended notion is called \emph{differential graded (dg) Koszul Duality} and is manifested in the following adjunction,
$$\Cobar:  \dgC_{k}^c \leftrightarrows \dgA_{k}^a: \Barr,$$ 
where $k$ is assumed to be a field and the categories are conilpotent dg coalgebras and augmented dg algebras.  
Moreover, this adjunction establishes an equivalence of model categories. 
It is worth noting that this duality is a homological notion that historically has been used to construct well-behaved resolutions of algebras for the purpose of computing derived functors. 
Hence we expect the results of this article will provide the framework do the same in the equivariant setting, as was done in \cite{scull}. 
Many invariants for topological spaces possess the structure of a dg algebra \cite{sullivan1977} or dg coalgebra \cite{goerss1995simplicial}, \cite{RR22}, so the framework presented here sheds some light as to how to develop equivariant analogues as was achieved in \cite{MR}.
The novelty of the present article is two-fold: firstly, we obtain the desired fixed-point model structure on the category of equivariant dg algebras by other means than Stephan's framework and secondly, we formalize an equivariant version of Koszul Duality.

The organization of the present article is as follows. In the following section we establish the necessary algebraic and categorical definitions to formulate our framework.
We explain in the third section how to achieve the appropriate equivariant model structures that, despite the failing to show that the cellularity conditions hold for one of the categories, specifically the category of augmented dg algebras. 
Lastly, in Section 4 we define the equivariant Cobar and Bar functors and prove that they are the appropriate equivariant analogues. 
Furthermore, we show that the equivariant adjunction forms a Quillen equivalence. 

\subsection*{Acknowledgements:} The author would like to thank Manuel Rivera for the inspiration of this article and all the fruitful conversations pertaining to the contents. Furthermore, the author would like to thank José Manuel Moreno-Fernández for similarly fruitful conversations on topics related to the present material. 

\section{Preliminaries}

\subsection{Algebras and Coalgebras}
Let $A$ be an associative unital algebra over a field $k$.
An augmentation $\alpha$ on $A$ is a unital $k$-algebra homomorphism $\alpha : A \rightarrow k$,  so $\alpha (1) = 1$, and the existence of an augmentation implies that $1 \neq 0$ in $A$. 
Hence, $\alpha$ is a surjective map of algebras.
We denote by $A^+ = ker(\alpha) \subset A$ the augmentation ideal, meaning $A^+$ is a two-sided ideal in $A$, and $A= k \oplus A^+$ as a $k$-vector space.
Furthermore as a $k$-algebra homomorphism, $\alpha: A \rightarrow k$ endows the one-dimensional $k$-vector space $k$ with left and right $A$-module structures.

A dg algebra $A^{\bullet}=(A, d)$ over a field $k$. 
is a graded $k$-algebra, $A=\oplus_{i} A^i$ and with a differential $d: A \longrightarrow A$ which is a homogeneous $k$-linear map that is cohomologically graded of degree 1 and satisfies Leibniz rule:
    \[ d(a\cdot b) = (da)\cdot b+(-1)^{\mid a\mid }a\cdot (db),\]
and well as $d^2=0$. 
An augmentation $\alpha$ on a dg algebra $A^{\bullet}$ is a unital dg-algebra homomorphism $\alpha: A^{\bullet} \longrightarrow k$. 
This implies,
\begin{itemize}
    \item $\alpha(a)=0,$
for all $a \in A^i, i \neq 0, $
\item $\alpha(a b)=\alpha(a) \alpha(b)$ for all $a, b \in A,$ and $ \alpha(1)=1,$
\item $\alpha(d(a))=0$ for all $a \in A^{-1}$. 
\end{itemize}
We will denote by $A^{\bullet,+}=\operatorname{ker}(\alpha) \subset A^{\bullet}$ the augmentation ideal. 

Moving on to coalgebras, we mention that a standard reference sources on coalgebras are \cite{88}, \cite{93}, and \cite{21}.
A coassociative, counital coalgebra $C$ over a field $k$ is a $k$-vector space endowed with $k$-linear maps of comultiplication and counit,
$$\mu : C \rightarrow C\otimes_k C \qquad  \qquad  \epsilon: C \rightarrow k,$$
satisfying the following coassociativity and counitality axioms: 
\begin{itemize}
    \item $(\mu \otimes id_C )\circ \mu = (id_C \otimes \mu)\circ \mu$,
    \item $(\epsilon \otimes id_C )\circ \mu = id_C = (id_C \otimes \epsilon)\circ \mu.$
\end{itemize}
A coaugmentation $\gamma$ on a coalgebra $C$ is a  coalgebra homomorphism $\gamma: k \longrightarrow C_{\bullet}$. 

A dg-coalgebra $C_\bullet = (C, d)$ over $k$ is a graded coalgebra over $k$, $C= \oplus_i C_i$,  endowed with a coalgebra structure such that both the comultiplication map $\mu_\bullet$ and the counit map $\epsilon_\bullet$ are morphisms of chain complexes of vector spaces. 
Specifically they are homogeneous linear maps of degree 0 commuting with the differentials.
Here the standard induced differential, 
$$d(c' \otimes c'') = d(c') \otimes c'' + (-1)^ic' \otimes d(c''),$$  
for all $c' \in C_i$ and $c'' \in C_j$ is presumed on the tensor product of complexes $C_\bullet \otimes_k C_\bullet$, while the differential on the $k$-vector space $k$ is zero. 
Note that it will sometimes be useful to think of dg coalgebras as chain complexes of $k$-vector spaces endowed with coalgebra structure.

\subsection{Model Structures on Dg Algebras and Dg Coalgebras }
%It turns out that the adjunction induced at the beginning of the section is a Quillen equivalence between appropriate model category structures.
Now we will discuss the model structures necessary to formulate dg Koszul duality. 

Denote by $\dgA_k$ the category of dg-algebras over $k$ with the usual dg-algebra homomorphisms and let $\dgA_k^a$ the category of augmented dg-algebras with
dg-algebra homomorphisms preserving the augmentations.
Similarly, denote by $\dgC_k$ the category of dg-coalgebras over $k$ with morphisms of dg coalgebras and by $\dgC_k^{coa}$ the category of coaugmented.
Let $\dgC_k^c \subset \dgC_k^{coa}$ denote the full subcategory whose objects are the conilpotent dg-coalgebras.

The relevant model category structure on dg algebras has been described originally in \cite{Jar97}. 
Citing \cite{posit} , we provide a more digested discussion. The category $\dgA_k$ has a standard model category structure where the quasi-isomorphisms are the weak equivalences and the surjective dg-algebra maps namely, dg-algebra maps that are surjective as maps of graded vector spaces, are the fibrations. 
One can check that from the explicit descriptions of the classes of cofibrations and trivial cofibrations in $\dgA_k$ which can be found in the formulation of  Theorem 9.1(a) in \cite{pos}, that the model category $\dgA_k$ is cofibrantly generated. 
From the same reference we can conclude that the category of $\dgA_k^{a}$ possesses the same structure.  

Since it will be relevant later, we recall the generating cofibrations and acyclic cofibrations are as outlined in Theorem 3.5.2 in \cite{rr} for $\dgA_R$. 
Suppose that \(R\) is a commutative ring and let \(T_n(x)\) denote the free dg algebra generated by an element \(x\) in degree \(n\), and \(S_n(x)\) the free graded \(R\)-algebra \(R[x]\) generated by an element \(x\) in degree \(n\) and equipped with the trivial differential. 
Then the maps,
\[R\rightarrow T_n(x),\] 
for \(n\in \mathbb{Z}\),
generate the trivial cofibrations, and the maps,
\[R \rightarrow T_n(x), \quad
R \rightarrow S_n(x), \quad
S_{n-1}(dx) \rightarrow T_n(x), \]
for \(n \in \mathbb{Z}\), generate the cofibrations.

The category $\dgC_k^c$ also has a natural model structure described in \cite{hinich}.
The weak equivalences are the morphisms which get inverted when one inverts the filtered quasi-isomorphisms and the cofibrations are the injective dg-coalgebra maps namely, the dg-coalgebra maps that are injective as maps of graded vector spaces. Explicit descriptions of all the classes of morphisms involved can also be found in \cite{pos}.
Furthermore, the model category $\dgC_k^c$ is cofibrantly generated and in fact, the
injective morphisms of finite-dimensional dg-coalgebras are the generating cofibrations.
This is  due to the fact that any dg-coalgebra is the directed union
of its finite-dimensional dg-subcoalgebras. 
The injective filtered quasi-isomorphisms strictly compatible with the filtrations, acting between finite-dimensional dg-coalgebras, are the generating trivial cofibrations. 
Soon we will see that this model structure on dg coalgebras may be understood as left transferred from dg algebras along the Cobar-Bar adjunction, defined in the next section.

\subsection{A Review of DG Koszul Duality}
The notion of dg Koszul duality was first put into print by Positselski \cite{pos} and is given by the Cobar and Bar functors defining the adjunction
$$\operatorname{Cobar}:  \dgC^c_{k} \leftrightarrows \dgA^a_{k}: \operatorname{Bar},$$ 
between the categories of dg conilpotent coassociative coalgebras and of dg augmented associative algebras.
We explicitly define the $\Cobar$ functor, in the following terms: given a conilpotent dg coalgebra, \(N_{\bullet} \in \dgC^c_R\), we produce an augmented dg algebra by constructing the tensor algebra: 
    \[T(s^{-1}N_{\bullet} ) = R \oplus ^{-1}N \oplus (s^{-1}N_{\bullet}  \otimes s^{-1}N_{\bullet} ) \oplus (s^{-1}N_{\bullet}  \otimes s^{-1}N_{\bullet}  \otimes s^{-1}N_{\bullet} ) \oplus ..., \] 
where \(s^{-1}\) denotes the functor which shifts the grading by -1, namely, \((s^{-1}N_{\bullet} )_i = N_{i+1}\).
The differential is then defined as the linear extension of the map
\[-s^{-1} \circ \partial \circ s^{+1} + (s^{-1} \otimes s^{-1}) \circ \delta \circ s^{+1} : s^{-1}N_{\bullet}  \rightarrow T(s^{-1}N_{\bullet} ),\]
to all of \(T(s^{-1}N_{\bullet} )\). 
Lastly, the dg algebra \(\Cobar(N_{\bullet} )\) is equipped with a natural augmentation \(\Cobar(N_{\bullet}) \rightarrow R\) given by the canonical projection map.

The bar-construction, \(\operatorname{Bar}\left(A^{\bullet}\right)\), of an augmented dg-algebra \(\left(A^{\bullet}, \alpha\right)\) is the bicomplex,
\[
k \stackrel{0}{\longleftarrow} A^{\bullet,+} \longleftarrow A^{\bullet,+} \otimes_k A^{\bullet,+} \longleftarrow A^{\bullet,+} \otimes_k A^{\bullet,+} \otimes_k A^{\bullet,+} \longleftarrow \ldots,\]
with the bar differential $\partial$ given by extending the following formula: 
\[\partial(a \otimes b)=a b, \quad \partial(a \otimes b \otimes c)=a b \otimes c-a \otimes b c,\]
for for all $a, b, c, \in A^{\bullet,+}$, and and the differential $d$ induced by the differential $d$ on $A^{\bullet}$. 

The differential $d$ on $\operatorname{Bar}\left(A^{\bullet}\right)$ acts on every tensor power $(A^{\bullet,+})^{\otimes n}$ of the augmentation ideal $A^{\bullet,+}$ by the tensor product of $n$ copies of the differential $d$ on $A^{\bullet,+}$ in the sense of the tensor product of complexes.

It should be noted that the total complex of the bar construction $\operatorname{Bar}\left(A^{\bullet}\right)$ is produced by taking infinite direct sums along the diagonals.
The underlying graded vector space of $\operatorname{Bar}\left(A^{\bullet}\right)$ 
is endowed with the graded coalgebra structure of the tensor coalgebra $\operatorname{Bar}(A)= \emph{\rotatebox[origin=c]{180}{T}}\left(A^{+}[1]\right)$.
Here the shift of cohomological grading is 1 in the latter formula. 
This is a way to express the passage to the total complex of the bicomplex $\operatorname{Bar}\left(A^{\bullet}\right)$, with its total grading equal to the grading induced by the grading of $A$ minus the grading by the number of the tensor factors. This makes $\operatorname{Bar}\left(A^{\bullet}\right)$ a dg-coalgebra.

\section{Checking Equivariant Model Structures Exist}
For the purposes of this write-up, we will take the group $G$ to be a topological group and $\C$ to be a cofibrantly generated model category. 
Furthermore, we drop the bullet notation moving forward for dg objects to reduce heavy notation. 
\begin{definition}
    The category of $G$-objects in $\C$, denoted by $G$-$\C$, the category of functors. 
\end{definition}
For our purposes $\C$ will either be $\dgA_k^a$ or $\dgC_k^c$. 
To simplify notion, let $\Og$-$\dgA_k^a$ and $\Og$-$\dgC_k^c$ be the functor categories Fun$(\Og^{op}, \dgA_k^a)$ and Fun$(\Og^{op}, \dgC_k^c)$, respectively.
Something to note is that when inducing a $G$-equivariant model structure on a given cofibrantly generated model category, rather than just a model structure on the objects equipped with $G$-actions, it is necessary to define the weak equivalences on fixed point objects.
Fixed point objects are defined in full generality in the following manner: 
\begin{definition}
    Given any subgroup $H$ of $G$, define the $H$-fixed point functor, denoted by $(-)^H$, as the composite 
    $$G\text{-}\C \xrightarrow[]{\operatorname{restrict}} H\text{-}\C  \xrightarrow[]{\operatorname{lim}}  \C .$$
\end{definition}
Given this the \emph{fixed point model structure} on $G$-$\C$, if it exists, has as weak equivalences, respectively fibrations, the maps $X \rightarrow Y$ such that, for every subgroup $H$ of $G$, the map $X^H \rightarrow Y^H$ is a weak equivalence, respectively fibration, in $\C$.

Since $\C$ is a model category, it is complete and cocomplete so we can think of it as being tensored and cotensored over the category of sets, in the following way. 
Let $X$ and $Y$ be objects of $\C$ and $S$ a set. 
Then there is a tensor and cotensor
$$S \otimes X = \coprod_S X \quad \quad [S,Y] = \prod_S Y$$
such that
$$\operatorname{Hom}_\C( S\otimes X, Y ) \cong \operatorname{Hom}_{\s}(S, \operatorname{Hom}_\C(X,Y)) \cong \operatorname{Hom}_\C (X, [S,Y]).$$

Stephan \cite{stephan} provides a criteria for checking when a cofibrantly generated category admits the fixed point model structure. 
In particular, its when the following so-called \emph{cellularity conditions} hold.
\begin{definition}\label{cellular}
    The $H$-fixed point functors $(-)^H$ are said to be cellular, or satisfy the cellularity conditions, if for any subgroup $H$ of $G$ the follow holds: 
\begin{enumerate}
    \item the functor $(-)^H$ preserves pushouts of diagrams where one arrow is of the form 
    $$id_{G/L} \otimes f : G/L \otimes X \rightarrow G/L \otimes Y$$
    for some subgroup $L$ of $G$ and $f$ a generating cofibration of $\C$
    \item the functor $(-)^H$ preserves filtered colimits of diagrams in $G$-$\C$
    \item for any subgroup $L$ of $G$ and object $X$ in $\C$, the induced map 
    $$(G/H)^L \otimes X \rightarrow (G/H \otimes X)^L$$
    is an isomorphism in $\C$
\end{enumerate}
\end{definition}
We will make regular use of following theorems so we recall them here.
\begin{theorem}\label{t2.1}(Thereom 3.17 in \cite{stephan},  Theorem 1.8, Chapter 3 in \cite{MM02})
    If $\C$ is a cofibrantly generated model category, with a generating set of cofibrations denoted $I$ and a generating set of acyclic cofibrations dentored $J$, such that $G$-$\C$ has cellular fixed point functors, then $G$-$\C$ is a cofibrantly generated model category with generating cofibrations
    $$I_G = \{G/H \otimes i \}_{i \in I, H \leq G}, $$
    generating acyclic cofibrations
    $$J_G = \{G/H \otimes j\}_{j \in J, H \leq G},$$ 
    and weak equivalences are maps of $G$-objects which are weak equivalences in $\C$ after passage to $H$ fixed points for all $H \leq G$.
\end{theorem}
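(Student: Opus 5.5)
The plan is to deduce the model structure on $G$-$\C$ by transferring the given cofibrantly generated structure on $\C$ along the family of adjunctions
$$G/H \otimes (-) : \C \rightleftarrows G\text{-}\C : (-)^H, \qquad H \leq G,$$
and to apply Kan's recognition (transfer) theorem for cofibrantly generated model categories. The first step is to check that these are adjunctions. Using the tensoring over $\s$ recalled above and the Yoneda lemma on the orbit category, a map $G/H\otimes X \to Y$ of $G$-objects is the same as a map $X \to Y^H$ in $\C$. Assembling these over all $H$, we obtain a single adjunction between $\prod_{H\le G}\C$ and $G$-$\C$. We then \emph{define} the weak equivalences and fibrations in $G$-$\C$ to be the maps whose images under every $(-)^H$ are weak equivalences, respectively fibrations, in $\C$. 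By adjunction, the fibrations are exactly the maps with the right lifting property against $J_G$, and the acyclic fibrations are exactly the maps with the right lifting property against $I_G$.

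Kan's theorem then needs two hypotheses. The first is a smallness condition: the domains of $I_G$ and $J_G$ must be small relative to $I_G$-cell and $J_G$-cell complexes. The second is an acyclicity condition: every relative $J_G$-cell complex must be a weak equivalence. For smallness, I would use cellularity conditions (1) and (2). Cellular maps in $G$-$\C$ are transfinite composites of pushouts of maps $G/L\otimes f$, and by (1) and (2) the functor $(-)^H$ carries these to transfinite composites of pushouts in $\C$. Then $\operatorname{Hom}(G/H\otimes A, -) = \operatorname{Hom}_\C(A, (-)^H)$ commutes with the relevant sequential colimits, because $A$ is small in $\C$.

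For acyclicity, I would compute what $(-)^H$ does to a pushout along $G/L\otimes j$ with $j\in J$. By (1) this fixed-point object is the pushout in $\C$ of $(G/L\otimes j)^H$. By (3), $(G/L\otimes j)^H \cong (G/L)^H\otimes j$, which is a coproduct of copies of $j$ and hence an acyclic cofibration in $\C$. So $(-)^H$ sends the pushout to a pushout of an acyclic cofibration, which is again an acyclic cofibration. Condition (2) then passes this through transfinite composites, using that acyclic cofibrations in $\C$ are closed under transfinite composition. Hence every relative $J_G$-cell complex becomes an acyclic cofibration after applying each $(-)^H$, and in particular is a weak equivalence.

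Given these two hypotheses, Kan's theorem produces the cofibrantly generated structure with generating sets $I_G$ and $J_G$ and with weak equivalences detected on fixed points, as stated. The main obstacle I expect is the acyclicity step. It depends entirely on (1) and (3) interacting correctly, namely on identifying $(G/L\otimes X)^H$ with $(G/L)^H\otimes X$ in a way that is natural enough to commute with the pushouts. For a topological group, one must also handle $G/L$ as a space; I would restrict to the discrete orbit sets, or to whatever tensoring the paper intends, and flag that point.
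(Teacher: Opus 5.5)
Your outline is the standard argument, and it matches the cited source: the paper gives no proof of its own and only cites Stephan and Mandell--May. There the result is obtained as you describe, by Kan's transfer theorem along $\prod_{H}\C \rightleftarrows G$-$\C$, with the cellularity conditions supplying smallness of the domains of $I_G, J_G$ and acyclicity of relative $J_G$-cell complexes. Two points need tightening. First, the paper states condition (1) only for generating cofibrations $f\in I$, so before applying it to pushouts along $G/L\otimes j$ with $j\in J$ (in the acyclicity step and in the smallness step for $J_G$) you should extend it to all cofibrations, by writing $j$ as a retract of a relative $I$-cell complex and using (2) and (3). Second, the smallness step needs (3), not just (1) and (2), to show that $(-)^H$ sends relative $I_G$-cell (resp.\ $J_G$-cell) complexes to relative $I$-cell (resp.\ $J$-cell) complexes, since the domains in $\C$ are only assumed small relative to these rather than absolutely small.
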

Now we establish the model category structure of the category of presheaves $\Og^{op}$-$\C$, where $\Og$ denotes the orbit category of the group $G.$
The category $\Og$ is the full subcategory of the category of $G$-sets with objects being orbits, $G/H$, for $H$ a subgroup of $G$. 
Therefore, $\Og^{op}$-$\C$ is the category of contravariant functors from $\Og$ to $\C$ with morphisms being natural transformations.

Using Theorem 11.6.1 in \cite{hird}, any category of diagrams in the cofibrantly generated model category $\C$ admits the \emph{projective model structure} and is again cofibrantly generated.
Suppose that $\C$ has generating cofibration set $I$ and generating acyclic cofibration set $J$.
Then the category of orbit diagrams
$\Og^{op}$-$\C$ has generating cofibrations and generating cofibrations
$$I_{\Og^{op}}:=\{ \Og^{op}(G / H, -) \otimes i\}_{i \in I, H \leq G},$$
and generating acyclic cofibrations
$$J_{\Og^{op}}:=\{\Og^{op}(G / H, -) \otimes j\}_{j \in J, H \leq G} .$$
It is worth noting that for a generating cofibration \(i: X \rightarrow Y\) in \(\C\), one can think of \(\Og^{op}(G / H, -) \otimes i\) as a natural transformation, from a functor defined by taking \(G/L\) to \((G/L)^H \otimes X\)  to another functor from to \(G/L\) to \((G/L)^H \otimes Y\). 
This natural transformation on components is a map from \((G/L)^H \otimes X\) to \((G/L)^H \otimes Y\). 
In the projective model structure, the weak equivalences and fibrations are defined as the object-wise weak equivalences and fibrations.

\subsection{The categories $G$-$\dgC_k^c$ and $\Og$-$\dgC_k^c$} 
Note that in this subsection since we are working with dg coalgebras we will take the tensor product to be the usual one.
\begin{proposition}\label{p4.3} The category $G$-$\dgC_k^c$ admits the fixed point model structure. 

\end{proposition}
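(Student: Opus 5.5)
The plan is to apply Theorem~\ref{t2.1} with $\C=\dgC_k^c$. By the discussion in the preliminaries, $\dgC_k^c$ is cofibrantly generated: $I$ is the set of injective maps between finite-dimensional conilpotent dg coalgebras, and $J$ is the set of injective filtered quasi-isomorphisms between finite-dimensional ones. It therefore suffices to check the three cellularity conditions of Definition~\ref{cellular} for the fixed point functors $(-)^H\colon G\text{-}\dgC_k^c\to\dgC_k^c$.

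The first structural input is how limits and colimits behave in $\dgC_k^c$. Colimits are created by the forgetful functor to chain complexes: the coproduct is essentially the direct sum with the coaugmentations identified, and pushouts and filtered colimits are computed on underlying complexes. Limits, by contrast, are not underlying; they come from the cofree conilpotent coalgebra. However, $(-)^H$ is the limit of an $H$-diagram, i.e.\ an equalizer of $X\rightrightarrows\prod_H X$. I will identify $X^H$ concretely as the largest subcoalgebra contained in the vector-space fixed points $\{x : hx=x \ \forall h\in H\}$. Since each $h$ acts by a coalgebra automorphism, the vector-space fixed points are already closed under the comultiplication in the sense that $\mu(X^{H}_{\mathrm{vs}})\subset X^{H}_{\mathrm{vs}}\otimes X^{H}_{\mathrm{vs}}$. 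To see this, use that $\mu(x)$ is fixed by $h\otimes h$ and that fixed points of a diagonal action on $V\otimes V$ contain, but need not equal, $V^H\otimes V^H$. So closure needs an argument. I would try to argue via the counit: if $\mu(x)=\sum a_i\otimes b_i$ with the $b_i$ linearly independent, then applying $h\otimes 1$ and comparing shows each $a_i$ is fixed, and symmetrically for the $b_i$. If this works, $X^H$ equals the vector-space fixed points with the induced structure, which is the key lemma.

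Given that lemma, the three conditions reduce to statements about fixed points of linear actions. For condition (2), fixed points of vector spaces commute with filtered colimits, since an element of a filtered colimit that is fixed lies in some stage, and after passing to a later stage it is fixed there (for $H$ finitely generated; for general $H$ I would instead use that $G$ acts through the $G/L$ indexing and handle it via the orbit decomposition). For condition (3), $G/H\otimes X=\coprod_{G/H}X$ with $G$ permuting summands, so its $L$-fixed points are the summands indexed by $L$-fixed cosets, namely $\coprod_{(G/H)^L}X$; this is direct from the lemma and the description of coproducts. For condition (1), take a pushout of $G/L\otimes A\to G/L\otimes B$ (with $A\to B$ injective) along $G/L\otimes A\to X$. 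Because the map is injective, the pushout is $X\oplus \bigoplus_{G/L}(B/A)$ as a vector space with a permutation action on the second summand. Taking $H$-fixed points then splits as $X^H\oplus\bigoplus_{(G/L)^H}(B/A)$, which is the pushout of the fixed-point diagram.

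The main obstacle is the key lemma identifying the categorical limit $X^H$ in $\dgC_k^c$ with the naive fixed subspace, together with the exactness used in (1). Pushout fixed points only split like this because of injectivity, so I will lean on the cofibrations being monomorphisms. If the linear-independence argument for closure under $\mu$ fails, I would fall back on defining $X^H$ as the maximal subcoalgebra of the fixed subspace and re-verify (1)–(3) with that description.
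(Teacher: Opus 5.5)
Your overall route (check the three conditions of Definition~\ref{cellular} for $(-)^H$ on $G$-$\dgC_k^c$ and invoke Theorem~\ref{t2.1}) is the one the paper takes; the paper simply asserts the conditions by analogy with \cite{MR}. Your description of $X^H$ as the largest dg subcoalgebra inside the invariant vectors is also correct. The key lemma, however, is false, and the argument offered for it has no basis. Invariance of $x$ only gives $(h\otimes h)\mu(x)=\mu(x)$; nothing lets you apply $h\otimes 1$ and conclude that the $a_i$ are fixed.

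Concretely, the coproduct in $\dgC_k^c$ is the wedge $X\vee X=k\oplus\bar X_1\oplus\bar X_2$. Give it the swap action and take any $x\in\bar X$ with $\bar\mu(x)\neq 0$. The invariant vector $x_1+x_2$ has $\bar\mu(x_1+x_2)=\bar\mu(x)_1+\bar\mu(x)_2$, which has no component in $\bar X_1\otimes\bar X_2$. By contrast, every element of the span of the tensors $(y_1+y_2)\otimes(z_1+z_2)$ has equal components in $\bar X_1\otimes\bar X_1$ and in $\bar X_1\otimes\bar X_2$. So the invariant vectors do not form a subcoalgebra, and the categorical fixed points are strictly smaller.

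Even granting your lemma, your check of (3) would contradict it. The invariant vectors of $\bigoplus_{G/H}\bar X$ include orbit sums over finite non-trivial $L$-orbits, not only the summands indexed by $(G/H)^L$. The same omission affects your splitting in (1).

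More seriously, with the correct description of $X^H$, conditions (3) and (1) are actually false in $\dgC_k^c$, so repairing the lemma does not rescue this route.

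For (3), take $G=L=\mathbb{Z}/2$ and $H=e$. Then $(G/e)^G\otimes X=\coprod_{\emptyset}X=k$. But $(X\vee X)^{G}$ contains the dg subcoalgebra $k\oplus\{p_1+p_2 : p\in\operatorname{Prim}(\bar X)\}$. This is strictly bigger than $k$ whenever $X\neq k$: a conilpotent coalgebra with $\bar X\neq 0$ has nonzero primitives, and $d$ preserves primitives.

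For (1), let $A=k\oplus kp\subset B=A\oplus kq$ with zero differentials, $p$ primitive and $\bar\mu(q)=p\otimes p$. This is an injection of finite-dimensional conilpotent coalgebras, hence a generating cofibration. Push $G/e\otimes A\to G/e\otimes B$ out along the counit $G/e\otimes A\to k$.
\begin{itemize}
\item The pushout is $k\oplus kq_1\oplus kq_2$ with the $q_i$ now primitive, so its $G$-fixed points contain $q_1+q_2$.
\item On the other hand, $(G/e\otimes A)^G=(G/e\otimes B)^G=k\oplus k(p_1+p_2)$, because $p_1\otimes p_1+p_2\otimes p_2\notin k(p_1+p_2)\otimes k(p_1+p_2)$.
\item So the pushout of the fixed-point diagram is just $k$, and $(-)^G$ does not preserve this pushout.
\end{itemize}

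The source of the failure is the coaugmentation. For counital coalgebras whose coproduct is a plain direct sum, an orbit-sum computation like the one above forces $\mu(y)=0$, hence $y=(\epsilon\otimes 1)\mu(y)=0$. In the wedge it only forces $\bar\mu(y)=0$, so primitive orbit sums survive. Consequently the cellularity approach does not establish the proposition for $\dgC_k^c$, whether in your version or in the paper's appeal to the analogous result of \cite{MR}; a genuinely different argument would be needed.

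Separately, your treatment of (2) for non-finitely-generated $H$ is only a gesture. A general $G$-object does not act through a $G/L$-indexing, so the orbit-decomposition idea does not apply.
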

\begin{proof}
To prove the proposition we need to show the analogous cellularity conditions recalled in Definition \ref{cellular} hold. 
We remark that the proof of this Proposition is analogous to that of Proposition  3.2.1 in \cite{MR}.
\end{proof}
Next we establish the model structure on the presheaf category. 
\begin{proposition}\label{p3.1.2} The category $\Og$-$\dgC_k^c$ has a model structure where a map $\beta :X \rightarrow Y$ in $\Og$-$\dgC_k^c$  is a weak equivalence if $\beta(G/H)$ is a weak equivalence in $\dgC_k^C$ and is a fibration if $\beta(G/H)$ is a fibration in $\dgC_k^c$ for all $H < G.$
\end{proposition}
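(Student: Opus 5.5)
This is the projective model structure on the diagram category $\operatorname{Fun}(\Og^{op}, \dgC_k^c)$, so I would deduce it directly from Theorem 11.6.1 of \cite{hird}, as recalled above. Its only hypothesis is that the target be cofibrantly generated.

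\emph{Step 1: check the hypotheses.} The model structure on $\dgC_k^c$ recalled from \cite{hinich} and \cite{pos} is cofibrantly generated. Its generating cofibrations $I$ are the injective maps of finite-dimensional dg coalgebras. Its generating acyclic cofibrations $J$ are the injective filtered quasi-isomorphisms, strictly compatible with filtrations, between finite-dimensional dg coalgebras.

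I would also make sure the smallness part of cofibrant generation is genuinely available, since Hirschhorn's theorem requires the domains of $I$ and $J$ to be small relative to the relevant cell complexes. Here this follows from finite-dimensionality together with the fact that every dg coalgebra is the directed union of its finite-dimensional dg subcoalgebras. Finite-dimensional conilpotent dg coalgebras are then compact with respect to filtered colimits of injections. That suffices because relative $I$-cell and $J$-cell complexes are built from cofibrations, which are injections.

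\emph{Step 2: invoke the projective structure.} By Theorem 11.6.1 of \cite{hird}, the diagram category $\Og$-$\dgC_k^c$ carries a cofibrantly generated model structure. Its generating sets are $I_{\Og^{op}}$ and $J_{\Og^{op}}$. Here $\Og^{op}(G/H,-)\otimes i$ is formed using the coproduct in $\dgC_k^c$, which exists because $\dgC_k^c$ is a model category and so is cocomplete. In this structure a map $\beta$ is a weak equivalence, respectively a fibration, exactly when each component $\beta(G/H)$ is one in $\dgC_k^c$. That is precisely the statement.

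\emph{Main obstacle.} The delicate point is the smallness/compactness in Step 1. Colimits in $\dgC_k^c$ are computed on underlying vector spaces only for coproducts and filtered colimits, not for general pushouts. So I would verify that transfinite compositions of pushouts of maps in $I_{\Og^{op}}$ are still objectwise injective. Since cofibrations in $\dgC_k^c$ are exactly the injections and are stable under pushout and transfinite composition, this holds objectwise. Filtered colimits of injections are computed on underlying spaces, so a map from a finite-dimensional coalgebra into such a colimit factors through some stage. With this verified, the small object argument in Hirschhorn's proof goes through and the proposition follows.
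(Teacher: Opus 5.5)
Your proposal is correct and is essentially the paper's argument. The paper also just invokes the general existence theorem for projective model structures on diagram categories. It cites Proposition A.2.8.2 of \cite{Lur09}, the combinatorial version, rather than Theorem 11.6.1 of \cite{hird}, which it had itself quoted earlier for the same purpose. In the combinatorial version, local presentability of $\dgC_k^c$ makes every object small, so your hand-check of smallness is not needed.

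One harmless inaccuracy: pushouts, and indeed all connected colimits, in $\dgC_k^c$ \emph{are} computed on underlying complexes; it is coproducts that get amalgamated along the coaugmentation. Your argument never uses this claim, since you rely only on the closure of cofibrations under pushout and transfinite composition.
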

 The above statement follows from Proposition A.2.8.2 in \cite{Lur09}.
 With the previous two propositions we can deduce the following ``Elemendorf style'' theorem for conilpotent dg coalgebras. 
\begin{theorem}\label{thm:3.1.3}
    There is a Quillen equivalence of model categories 
    $$ \Theta: \Og \text{-} \dgC_k^c \rightleftarrows  G\text{-} \dgC_k^c : \Phi$$
    where $\Phi(X) = X(G/e)$ and $\Theta(Y) = \underline{Y}$ with $\underline{Y}$ being the functor that takes $G/H$ to the connected simplicial cocommutative coalgebra $Y^H$.
\end{theorem}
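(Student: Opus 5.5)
The proof follows Elmendorf's original argument in the form abstracted by Stephan \cite{stephan}. The model structures come from Proposition \ref{p4.3} and Proposition \ref{p3.1.2}. First I set up the adjunction and check its direction. For a $G$-object $Y$, the assignment $G/H \mapsto Y^H$ is functorial in $\Og^{op}$: a $G$-map $G/H \to G/K$ (with $gHg^{-1}\le K$ up to conjugacy) induces $Y^K \to Y^{H}$ via the action of $g$. So $\underline{Y}$ is an object of $\Og$-$\dgC_k^c$, and the fixed points are the limits in Definition \ref{cellular}. Conversely, $X(G/e)$ carries a $G$-action through $\operatorname{Aut}_{\Og}(G/e)\cong G$.

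Evaluation at $G/e$ is left adjoint to $Y\mapsto \underline{Y}$. The key formula is $\underline{Y}(G/H)=Y^H\cong \operatorname{Hom}_{G}(G/H\otimes -,\,Y)$-style representability. A map $X(G/e)\to Y$ is then the same as a compatible family $X(G/H)\to X(G/e)^H\to Y^H$. So in the Quillen pair I take $\Phi=\operatorname{ev}_{G/e}$ as left adjoint and $\Theta=\underline{(-)}$ as right adjoint; the displayed arrows should be read with that orientation. I then check that $\Theta$ is right Quillen. If $Y\to Y'$ is a fibration or acyclic fibration in the fixed point structure, then by definition each $Y^H\to Y'^H$ is one in $\dgC_k^c$. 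That is exactly an objectwise fibration or acyclic fibration, i.e.\ one in the projective structure of Proposition \ref{p3.1.2}.

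For the Quillen equivalence, $\Theta$ creates weak equivalences, since both notions are ``for all $H$, $(-)^H$ is a weak equivalence''. It therefore suffices to show that the derived unit $X\to \Theta(\Phi X)_{\text{fib}}$ is a weak equivalence for cofibrant $X$. Every object of $\dgC_k^c$ is fibrant only in special cases, so I will use the fibrant replacement in $G$-$\dgC_k^c$; alternatively I will check directly that the unit $X\to \underline{X(G/e)}$ is an isomorphism on cofibrant objects.

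I argue by cellular induction over $I_{\Og^{op}}$. On a generating cell $\Og^{op}(G/H,-)\otimes A$, the value at $G/e$ is $G/H\otimes A$. The unit at $G/L$ is then the map $(G/H)^L\otimes A\to (G/H\otimes A)^L$, which is an isomorphism by cellularity condition (3), established in Proposition \ref{p4.3}. Conditions (1) and (2) say that $(-)^L$ commutes with the pushouts along generating cells and with the filtered colimits that build cell complexes. Evaluation commutes with all colimits. Hence the unit is an isomorphism on all cell complexes, and on their retracts, i.e.\ on all cofibrant $X$. Combined with $\Theta$ preserving and reflecting weak equivalences between fibrant objects, this gives the Quillen equivalence.

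The main obstacle is the cellular induction: it relies entirely on the cellularity conditions for conilpotent dg coalgebras, which Proposition \ref{p4.3} only asserts by analogy with \cite{MR}. Fixed points are limits, and limits in $\dgC_k^c$ are not computed underlying, because they require the cofree conilpotent coalgebra. So I would first verify concretely that $(G/H\otimes A)^L=(\coprod_{G/H}A)^L$ is $\coprod_{(G/H)^L}A$. Coproducts of coalgebras are direct sums with the coaugmentations identified, and the $L$-fixed subcoalgebra of a permutation sum is the sum over fixed summands. I also need pushouts along injective maps to be compatible with taking fixed subcoalgebras. If those computations hold, the rest is formal.
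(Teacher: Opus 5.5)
Your formal reductions are correct and follow the route the paper intends. The paper's proof is only a pointer to the analogous argument in \cite{MR}, i.e.\ Stephan's cellular induction fed by Proposition~\ref{p4.3}. You rightly note that the arrows in the statement are mislabelled: $\Phi=\mathrm{ev}_{G/e}$ is the left adjoint, and $\Theta$ creates fibrations and all weak equivalences. So everything reduces to the unit $X\to\Theta\Phi X$ on cofibrant $X$.

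The gap is exactly the computation you deferred, and it fails: the $L$-fixed subcoalgebra of a permutation wedge is \emph{not} the wedge over the fixed summands. The limit $Y^L$ in $\dgC_k^c$ is the largest dg subcoalgebra of $Y$ that contains the coaugmentation and lies in the invariant subspace, since the image of any $L$-invariant map $C\to Y$ is such a subcoalgebra. Now let $O\subseteq G/H$ be a finite $L$-orbit with $|O|\ge 2$ and let $x\in A$ be primitive. Then $\sum_{s\in O}x_s$ is again primitive and invariant, so it lies in $Y^L$. For a concrete case, take $G=\mathbb{Z}/2$, $H=e$, $L=G$, and $A=k\oplus kx$ with $x$ primitive and $dx=0$. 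Then $G/e\otimes A=A\vee A=k\oplus kx_1\oplus kx_2$ with the swap action. Its invariant subspace $k\oplus k(x_1+x_2)$ is already a dg subcoalgebra, so $(G/e\otimes A)^G\cong A$, whereas $(G/e)^G\otimes A=\coprod_{\emptyset}A=k$. By conilpotency every $A\neq k$ in $\dgC_k^c$ has nonzero primitives. Hence condition (3) fails for every nontrivial $A$ as soon as some orbit is finite and nontrivial.

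This is not a repairable step in an otherwise valid argument. The object $X=\Og^{op}(G/e,-)\otimes A$ is cofibrant, being a single cell attached along the generating cofibration $k\hookrightarrow A$. The $G/G$-component of its unit is $X(G/G)=k\to X(G/e)^G=(A\vee A)^G\cong A$. This map is not a quasi-isomorphism, since $H_*(A)=k\oplus kx$, so it is not a weak equivalence in $\dgC_k^c$. Because $\Theta$ creates weak equivalences, fibrant replacement cannot help: the derived unit $X\to\Theta(R\Phi X)$ is not a weak equivalence either. So for any nontrivial finite $G$, $(\Phi,\Theta)$ is not a Quillen equivalence for the fixed-point structure on $G$-$\dgC_k^c$ (whenever that structure exists). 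The paper's appeal to Proposition~\ref{p4.3} rests on the same unverified condition (3). The problem you flagged therefore lies in the statement itself. Fixing it would require changing the notion of fixed points or the class of coalgebras, not just supplying the omitted computation.
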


This proof is also analogue to that of simplicial coalgebra setting namely Theorem 3.1.3 in \cite{MR}.

\subsection{The categories $G$-$\dgA_k^a$ and  $\Og$-$\dgA_k^a$}
In this subsection, we investigate how to obtain the desired fixed points model structure on $G$-$\dgA_k^a$. 

Note that in this subsection, since we are working with augmented dg algebras, we will take the tensor product to be the usual tensor product.
\begin{proposition}\label{p4.1} The category $\dgA^a_k$ admit condition 3 and 1 in Definition \ref{cellular}.

\end{proposition}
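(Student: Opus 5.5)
The plan is to follow the strategy of Proposition 3.2.1 in \cite{MR}. The first step is to make the tensoring over $\s$ explicit in $\dgA^a_k$. For a set $S$ and an augmented dg algebra $X$, $S\otimes X=\coprod_S X$ is the coproduct in $\dgA^a_k$. This is the free product of copies of $X$ amalgamated over $k$. As a graded vector space it is
$$\coprod_S X \;=\; k\oplus\bigoplus_{n\ge 1}\ \bigoplus_{s_1\neq s_2\neq\cdots\neq s_n} X^{+}_{s_1}\otimes X^{+}_{s_2}\otimes\cdots\otimes X^{+}_{s_n},$$
with the differential acting factorwise and the product given by concatenation, multiplying adjacent factors with equal labels.

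Next I record that limits in $\dgA^a_k$ are created by the forgetful functor to augmented chain complexes. Since $H$ is a group acting on an object, the fixed point functor $(-)^H$ is the equalizer of the action maps. It is therefore computed on underlying complexes as the subcomplex of $H$-invariant elements, which is automatically a dg subalgebra containing $k$ and compatible with the augmentation.

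For condition 3, take $S=G/H$ with $L$ acting on $\coprod_{G/H}X$ by permuting labels; $L$ acts on the word decomposition by permuting word types. The comparison map $(G/H)^L\otimes X\to (G/H\otimes X)^L$ is the canonical map $\coprod_{(G/H)^L}X\to\coprod_{G/H}X$ induced by the inclusion of the fixed labels. It lands in the invariants, and it is injective because it is the inclusion of the summands whose words use only fixed labels. For surjectivity I would split an invariant element according to the $L$-orbits of word types. Words all of whose labels are $L$-fixed are exactly the image of the map. The remaining words lie in free-or-nontrivial $L$-orbits, and I would try to show that the invariant part of their span vanishes. This is the step I expect to be the main obstacle. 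Orbit sums such as $x_{s}+x_{gs}$ are invariant in the naive linear sense, so the argument must use the paper's convention for the tensoring, as in \cite{MR}: the identification is made in the category after passing to the equalizer of the action maps. Concretely, I would verify that $\lim_L$ of the restricted diagram agrees with the coproduct over the fixed labels, as the analogous identification does in the coalgebra case (Proposition \ref{p4.3}). The first thing I would check is whether this comparison is stated up to this convention or on the nose.

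For condition 1, consider a pushout along $id_{G/L}\otimes f$ with $f$ one of the generating cofibrations $k\to T_n(x)$, $k\to S_n(x)$ or $S_{n-1}(dx)\to T_n(x)$. Each of these is a free extension by one generator, possibly with prescribed differential. The pushout $P$ of $A\leftarrow G/L\otimes X\to G/L\otimes Y$ is then $A$ with generators $x_{s}$, $s\in G/L$, freely adjoined, with $dx_s$ prescribed in $A$. Applying the word decomposition relative to $A$ and the same orbit analysis as in condition 3, I would identify $P^H$ with $A^H$ with the generators $x_s$, $s\in (G/L)^H$, adjoined. By condition 3 this is the pushout of $A^H\leftarrow (G/L\otimes X)^H\to(G/L\otimes Y)^H$. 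Hence $(-)^H$ preserves these pushouts, with the same obstacle as before concentrated in the non-fixed labels.
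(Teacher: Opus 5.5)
\textbf{There is a genuine gap, and it cannot be closed.} The step you flag as the main obstacle is not merely hard; it is false, and no convention rescues it. You observe yourself, correctly, that limits in $\dgA^a_k$ are created by the forgetful functor. So $(-)^L$ is literally the subcomplex of $L$-invariant vectors, and orbit sums lie in it. The analogy with coalgebras does not transfer: there an equalizer is the largest \emph{subcoalgebra} on which the two maps agree, which can be strictly smaller than the linear invariants, whereas for algebras nothing cuts the invariants down.

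Concretely, take $G=\mathbb{Z}/2$, $H=e$, $L=G$ and $X=T_n(x)$. Then $(G/e)^G=\emptyset$, so the source of the comparison map is the initial object $k$. But $(\coprod_{G/e}X)^{G}=(X\ast X)^{\mathbb{Z}/2}$ contains the nonzero element $x_1+x_2$. So condition 3 fails in $\dgA^a_k$, and the surjectivity you set out to prove cannot hold. The paper's own proof does not get around this. It takes the coproduct in $\dgA^a_k$ to be the tensor product, which is the coproduct only for graded-commutative algebras (your free-product description is the right one), and then simply asserts the isomorphism. Even in that model, $x\otimes 1+1\otimes x$ would be a counterexample.

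Your treatment of condition 1 inherits the same defect, since it identifies $P^H$ with $A^H$ with only the fixed-label generators adjoined. Condition 1 genuinely fails as well. Take $G=\mathbb{Z}/2$, $L=e$, $H=G$ and $f\colon k\to T_n(x)$. Let $A=k[a]$, with $a$ a degree-zero cycle, $a\mapsto 0$ under the augmentation, and trivial $G$-action; it receives $G/e\otimes k=k$ via its unit. Then $P=A\ast T_n(x)_1\ast T_n(x)_2$ is the free graded algebra on $a,x_1,dx_1,x_2,dx_2$. Forgetting labels defines a grading by words in $a,x,dx$, and this grading is respected by the product and by the $G$-action.

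The pushout of the fixed-point diagram is $A\ast (T_n(x)_1\ast T_n(x)_2)^G$. A product of homogeneous elements of $A$ and of $(T_n(x)_1\ast T_n(x)_2)^G$ of word-degree $xax$ must factor as $(x)(a)(x)$. The invariants of word-degree $x$ are spanned by $x_1+x_2$, so the image of the comparison map in that degree is spanned by $(x_1+x_2)a(x_1+x_2)$. However, $P^G$ is two-dimensional in that degree, spanned by $x_1ax_1+x_2ax_2$ and $x_1ax_2+x_2ax_1$. Hence $(-)^G$ does not preserve this pushout. The correct outcome of your orbit analysis is therefore a counterexample to the proposition, not a proof of it.
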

\begin{proof}   
We begin by showing the third cellularity condition.
Given a subgroup \(L \leq G\) and \(A \in \dgA_k^a\) we want to show that the induced map 
\[\left(\coprod_{({G/L})^H} A \right) \rightarrow  \left(\coprod_{G/L} A\right)^H, \]
is an isomorphism.
Note that the coproduct in \(\dgA_R^a\) is the usual tensor product of algebras. 
So the action of \(H\) on \(\otimes_{G/L} A\) is given by permuting the copies of \(A\), since each \(A\) itself has trivial \(G\)-action.
Therefore, this action is determined precisely by the action of \(H\) on \(G/L\). 
The desired isomorphism follows.

To prove 1. we use condition 3.
Consider the pushout in \(G\)-\(\dgA_k^a\)
\[\begin{tikzcd}
 \otimes_{G/L} R  \arrow[d, "\otimes_{G/L} i"'] \arrow[rr, "f"] &  & A \arrow[d] \\
\otimes_{G/L} T_n(x)\arrow[rr]  &  & D,
\end{tikzcd}\]
where \(D = A \otimes \otimes_{G/L} T_n(x)\) 
and \(i\) is a generating cofibration as described earlier. 
Note that \(D\) is given by concatenating \(G/L\)-many copies tensor copies of \(T_n(x)\) to \(A\). 
This can be thought of a equivariantly gluing ``\(n\)-disks'' to the \(n\)-th level of \(A\) and their boundaries attached at level \(n-1\) of \(A\).
Now we apply the fixed point functor \((-)^H\) to the upper horizontal arrow and left vertical arrow and then take the pushout, so we get the diagram below 
\[\begin{tikzcd}
 (\otimes_{G/L} R)^H  \arrow[d, "(\otimes_{G/L} i)^H"'] \arrow[rr, "f^H"] &  & A^H \arrow[d] \\
(\otimes_{G/L} T_n(x))^H\arrow[rr]  &  & D^H,
\end{tikzcd}\]
but using condition 3. the diagram becomes 
\[\begin{tikzcd}
 \otimes_{(G/L)^H} R  \arrow[d, "\otimes_{(G/L)^H} i"'] \arrow[rr, "f^H"] &  & A^H \arrow[d] \\
\otimes_{(G/L)^H} T_n(x)\arrow[rr]  &  & A^H \otimes (\otimes_{(G/L)^H} T_n(x)). 
\end{tikzcd}\]
So the resulting pushout is given by concatenating to \(A_n^H\) the appropriate copy of \(T_n(x)\) indexed by \((G/L)^H\). 

If we inspect the \(H\)-fixed points of \(D\) we find that they coincide with pushout above because \(D^H\) is only concerned with the copies of \(T_n(x)\) indexed by \((G/H)^L\) and glues them equivariantly to the \(H\)-fixed points \(A\) at level \(n.\)
\end{proof}

We are unable to show explicitly that condition 2 in the Definition \ref{cellular} holds for $\dgA_k^a$, but we do not mean to imply that Theorem \ref{cellular} does not hold for $\dgA_k^a$. 
Given this reality we recall the subsequent theorem so that at least we may obtain a right induced model structure on fibrant objects. 
After showing the induced model structure exists, we  argue why it coincides with the fixed point model structure. 
\begin{theorem}(Theorem A in \cite{Maru}) \label{thm:maru}
Let $\left(\mathcal{M}, \operatorname{Cof}_{\mathcal{M}}, \operatorname{Fib}_{\mathcal{M}}, \mathcal{W}_{\mathcal{M}}\right)$ be a combinatorial model category, and let $\mathcal{C}$ be a locally presentable category. Suppose that we have an adjunction
$$\begin{tikzcd} 
{\C}  \arrow[rr, "F"', shift left=-2] & & {\mathcal{M}},  \arrow[ll, "F'"',shift right=2 ]
\end{tikzcd}$$
and that the following properties are satisfied:
\begin{enumerate}
    \item  every morphism $f \in F^{-1}\left(\mathrm{Fib}_{\mathcal{M}} \cap \mathcal{W}_{\mathcal{M}}\right)$ has a fibrant replacement $f' \in F^{-1}\left(\mathcal{W}_{\mathcal{M}}\right)$,
    \item for every object $X$ such that $FX$ is fibrant, there is a factorization of the diagonal morphism $X \xrightarrow{w} P X \xrightarrow{p} X \times X$ with $w \in F^{-1}\left(\mathcal{W}_{\mathcal{M}}\right)$ and $p \in F^{-1}\left(\mathrm{Fib}_{\mathcal{M}}\right)$.
\end{enumerate}
Then, there exists a combinatorial model structure on $\mathcal{C}$ in which an object $X$ is fibrant if and only if $R X$ is fibrant in $\mathcal{M}$, and a morphism $f$ between fibrant objects is a weak equivalence (resp. fibration) if and only if $R f$ is so in $\mathcal{M}$.
\end{theorem}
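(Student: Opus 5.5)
The plan is to build the model structure on $\mathcal{C}$ directly from the adjunction. Cofibrations are created by the left adjoint $F'$, and the weak equivalences are pinned down on the fibrant objects (those $X$ with $FX$ fibrant). The conclusion writes $R$ for the right adjoint, which in the displayed adjunction is $F$. Fix generating sets $I$ and $J$ for the combinatorial model category $\mathcal{M}$. Since $\mathcal{C}$ is locally presentable, the small object argument applies to $F'(I)$ and $F'(J)$, so I define:
\begin{itemize}
\item the \emph{cofibrations} of $\mathcal{C}$ as ${}^{\pitchfork}(F^{-1}(\mathrm{Fib}_{\mathcal{M}}\cap\mathcal{W}_{\mathcal{M}}))$, the maps with the left lifting property against those trivial fibrations;
\item the \emph{trivial fibrations} as $F^{-1}(\mathrm{Fib}_{\mathcal{M}}\cap\mathcal{W}_{\mathcal{M}})$;
\item the \emph{fibrant objects} as those $X$ with $FX$ fibrant.
\end{itemize}
By adjunction these are exactly the cofibrantly generated weak factorization systems $({}^\pitchfork(F'(I)^\pitchfork),F'(I)^\pitchfork)$ and the analogous one for $F'(J)$. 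Both are accessible, so the factorizations exist functorially.

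Next I would use hypothesis (2) to get a homotopy theory on fibrant objects. The factorization $X\xrightarrow{w}PX\xrightarrow{p}X\times X$ is a path object for every fibrant $X$: since $F$ preserves products and $FX$ is fibrant, $Fp$ being a fibration makes both projections $PX\to X$ lie in $F^{-1}(\mathrm{Fib}_{\mathcal{M}}\cap\mathcal{W}_{\mathcal{M}})$, by two-out-of-three in $\mathcal{M}$. This gives a right homotopy relation on maps from a cofibrant object into a fibrant one.

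Using hypothesis (1), every $f\in F^{-1}(\mathrm{Fib}_{\mathcal{M}}\cap\mathcal{W}_{\mathcal{M}})$ admits a replacement between fibrant objects that $F$ still sends to a weak equivalence. From this I would show that, between fibrant objects, the class $F^{-1}(\mathcal{W}_{\mathcal{M}})$ coincides with the maps inducing bijections on homotopy classes out of every cofibrant object. This is the Brown factorization lemma argument: factor through the mapping path space built from $PX$.

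I then define a general map $f$ to be a \emph{weak equivalence} if a (functorial, $F'(J)$-cell) fibrant replacement of it lies in $F^{-1}(\mathcal{W}_{\mathcal{M}})$. To apply Smith's recognition theorem, or equivalently Joyal's theorem that a combinatorial model structure is determined by its cofibrations and its fibrant objects, I need to check the following:
\begin{itemize}
\item weak equivalences satisfy two-out-of-three and retracts, which is inherited from $\mathcal{M}$ via the replacement;
\item they form an accessible class, since $F$ is accessible and fibrant replacement is functorial;
\item trivial fibrations are weak equivalences, which is hypothesis (1) in the form above;
\item cofibrations that are weak equivalences are closed under pushout and transfinite composition.
\end{itemize}

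The last item is the main obstacle. Weak equivalences are not defined by a lifting property, so stability under pushout must be extracted from the homotopy characterization. The approach I would try first is to show that a cofibration $i$ is a weak equivalence iff it has the left lifting property against all fibrations between fibrant objects (maps in $F^{-1}\mathrm{Fib}_{\mathcal{M}}$ with fibrant codomain), proved via the path objects of hypothesis (2) and the usual "lift up to homotopy, then correct using $p$" argument. A class defined by a left lifting property is automatically closed under pushouts and transfinite compositions, which finishes the verification.

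Finally I would identify the result. Fibrant objects are exactly those with $FX$ fibrant, because $X\to *$ has the right lifting property against trivial cofibrations iff $F$ of it is a fibration, using the lifting characterization above. Between fibrant objects, weak equivalences and fibrations are detected by $F$ by construction.
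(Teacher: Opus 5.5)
The paper gives no proof of this statement; it is quoted as Theorem A of Guetta--Moser--Sarazola--Verdugo. So there is nothing internal to compare against, and your outline has to stand on its own.

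Its architecture is sound: cofibrations generated by $F'(I)$, $\mathcal{W}$ defined through the functorial $F'(J)$-fibrant replacement, Smith's recognition theorem, and the characterization of trivial cofibrations as those cofibrations lifting against maps of $F^{-1}(\mathrm{Fib}_{\mathcal{M}})$ with fibrant codomain. That characterization is in fact true.

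\textbf{The gap.} The step you flag as the main obstacle is not supplied, and the tool you name for it is not available from (2) as you use it. ``Lift up to homotopy, then correct using $p$'' needs a homotopy lifting property for $p\colon E\to X$, i.e.\ a path object for $E$ mapping compatibly to one for $X$. Hypothesis (2) gives only some path object for each fibrant object separately, with no compatibility. You could build a fibrewise one by $F'(J)$-factoring $E\to (E\times E)\times_{X\times X}PX$, but only once you know the anodyne first factor lies in $F^{-1}(\mathcal{W}_{\mathcal{M}})$.

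The same missing input affects steps you treat as routine:
\begin{itemize}
\item Hypothesis (1) yields only \emph{some} replacement in $F^{-1}(\mathcal{W}_{\mathcal{M}})$, while your $\mathcal{W}$ uses the functorial one. So ``trivial fibrations are weak equivalences'' needs independence of the replacement. Note that (1) plays no role between fibrant objects; this inclusion is its only job.
\item Your Brown-factorization characterization via homotopy classes needs right homotopy to be compatible with postcomposition, which again needs good path objects.
\end{itemize}

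\textbf{The missing lemma, which comes directly from (2).} Let $g,g'\colon Y\to Z$ with $Z$ fibrant satisfy $gj=g'j$ for some $F'(J)$-anodyne $j\colon V\to Y$. Lifting $j$ against $p=(p_0,p_1)\colon PZ\to Z\times Z$ gives $H$ with $p_0H=g$ and $p_1H=g'$. Since $Fp_0$ and $Fp_1$ are both left inverse to the weak equivalence $Fw$, we get $Fg=Fg'$ in $\operatorname{Ho}(\mathcal{M})$. Two consequences follow:
\begin{itemize}
\item Every anodyne $a$ between fibrant objects lies in $F^{-1}(\mathcal{W}_{\mathcal{M}})$. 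Take a retraction $r$ with $ra=\mathrm{id}$ by lifting against the terminal map; then $ar$ and $\mathrm{id}$ agree after $a$.
\item Comparison maps between two fibrant replacements are $F$-equivalences, which gives the independence you need.
\end{itemize}

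\textbf{Forward direction.} With the lemma this is two strict lifts, with no homotopy argument. Let $i\colon A\to B$ be a cofibration in $\mathcal{W}$, and $(u,v)$ a square against $p\colon E\to X$ in $F^{-1}(\mathrm{Fib}_{\mathcal{M}})$ with $FX$ fibrant. Write $j_A\colon A\to RA$ and $j_B\colon B\to RB$ for the anodyne replacement maps.
\begin{enumerate}
\item Factor $Ri$ as an anodyne $a\colon RA\to Y$ followed by $t\colon Y\to RB$ in $F^{-1}(\mathrm{Fib}_{\mathcal{M}})$.
\item By the lemma and two-out-of-three, $t\in F^{-1}(\mathrm{Fib}_{\mathcal{M}}\cap\mathcal{W}_{\mathcal{M}})$. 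So $i$ lifts against $t$, giving $s$ with $si=aj_A$ and $ts=j_B$.
\item Extend $v$ to $v'\colon RB\to X$. Lift the anodyne map $aj_A$ against $p$ in the square $(u,v't)$ to get $\mu\colon Y\to E$.
\item Then $\mu s$ is the required lift.
\end{enumerate}

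\textbf{Converse direction.} This uses the same lemma. Lift $i$ against the $F'(J)$-fibration part of $j_Bi$ and extend the lift over $RB$. Both composites then agree with identities after anodyne maps, so the resulting map of replacements is inverted in $\operatorname{Ho}(\mathcal{M})$.

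Once this lemma is in place, the detour through homotopy classes out of cofibrant objects is unnecessary.
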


Translating this theorem to our context, we have the following adjunction: 
\begin{equation} \label{adj}
    \begin{tikzcd} 
{G\text{-}\dgA_k^a}  \arrow[rr, "(-)^H"', shift left=-2] & & {\dgA_k^a}.  \arrow[ll, "G/H \otimes -"',shift right=2 ]
\end{tikzcd}
\end{equation}
It is worth noting that the model structure on $\dgA_k^A$ is also combinatorial \cite{RR22}. 
We will continue by showing that the conditions of this theorem are satisfied in this context in the following proof.

\begin{proof}
   
Condition 1: 
Suppose we have $f: X \rightarrow Y$ in $G$-$\dgA_k^a$ satisfying the condition above, then we want to show that 
$$\begin{tikzcd}
    X  \arrow[rr, "f"]  \arrow[d]& & Y\arrow[d]\\
    X'  \arrow[rr, "f'"] && Y',
\end{tikzcd}$$
there exists $f': X' \rightarrow Y'$ such that $(X')^H$ and $(Y')^H$ are fibrant and $(f')^H$ is a quasi-isomorphism. 
Consider the diagram below above when the $(-)^H$ is applied to $f$:
$$\begin{tikzcd}
    X^H  \arrow[rr, "f^H"]  \arrow[d]& & Y^H\arrow[d]\\
    \tilde{X}  \arrow[rr, dashed] && \tilde{Y},
\end{tikzcd}$$
where the vertical arrows are fibrant replacements in $\dgA_k$. Note that $f^H$ is assumed to be a trivial fibration.
Our goal is to construct the dashed arrow in such a way that it is a quasi-isomorphism in $\dgA_k^a.$
Recall that all objects in $\dgA_k^a$ are fibrant in the model structure prescribed so the diagram prior collapses to just the top arrow. 

A potential candidate for $f' $ is $G/H \otimes f^H$ which given the previous discussion translates to $\sqcup f^H$. Unraveling the symbols we find, 
$$\sqcup f^H: \sqcup X^H \rightarrow \sqcup Y^H,$$
so now the goal is to show that $(\sqcup f^H)^H$ is a quasi-isomorphism of dg algebras.
We know that $f^H$ is a quasi-isomorphism since it is a trivial fibration so $(\sqcup f^H)^H$ will also be a trivial fibration, since it is coproduct, indexed by cosets of $H$, of trivial fibrations.
Therefore $f'$ in the inverse image of $\mathcal{W}_{\dgA_k}$ under $(-)^H$, and hence the fibrant replacement of $f$.

Condition 2: Given any $X \in G$-$\dgA_k^a$, consider the diagram below:
$$\begin{tikzcd} 
{X}  \arrow[dr, "w"', shift left=-2] \arrow[rr, "\text{diagonal}"] & & {X \times X}  \\
& P(X) \arrow[ur, "p"', shift left=-2] 
\end{tikzcd}$$
where,
\begin{enumerate} 
\item our candidate for the path object $P(X)$ is $\operatorname{Hom}_R(C,X)$ where $C$ is the dg coalgebra as follows
$$0 \leftarrow C_0 = R[b,b_e] \leftarrow C_1 = R[e] \leftarrow C_2 =0 \cdots,$$
with the only nonzero differential being $e \mapsto b_e$. 
Notice that by construction this dg coalgebra $C$ is contractible and using the point $b$ we can define an augmentation map from $P(X)$ to the ground ring $R$ as follows: 
$$a: P(X)\rightarrow R,$$
$$p_b \mapsto 1_R,$$
where $p_b$ is the constant path at the point $b$.
The differential for the dg algebra $P(X)$ will be denoted $d$ and is defined as 
$$d \varphi = \varphi \circ \partial_C + \partial_X \circ \varphi,$$
with $\partial_C$ and $\partial_X$ being the differentials of $C$ and $X$, respectively. 
Additionally the multiplication of this dg algebra is given by the convolution, so given $\varphi$ and $\psi$ in $P(X)$ then 
$$\varphi \star \psi := \mu_X (\varphi \otimes \psi) \Delta_C,$$
where $\mu_X$ is the product in $X$ and $\Delta_C$ is the product in $C$.
\item our candidate for the map $w$ is as follows: 
$$w: X \rightarrow P(X),$$
$$x \mapsto w_x: C \rightarrow X,$$
and 
$$e \mapsto 0 ,\quad b_e \mapsto 0 \quad b \mapsto x,$$
\item our candidate for $p$ is an evaluation map as follows
$$p : P(X) \rightarrow X\times X,$$
$$ \varphi \mapsto (\varphi(b_e), \varphi(b)).$$
\end{enumerate}
If $X$ is taken to be a $G$-$\dgA_k^a$, then $P(X)$ as defined also inherits a $G$-action defined as 
$$g \cdot \varphi = g \varphi(c),$$
for all $\varphi \in P(X)$, $c\in C$, and $g \in G$.
Additionally $(X \times X, \partial_{X \times X})$, is also a $G$-$\dgA_k^a$ with the action performed component-wise

We start by checking we have the necessary structure and properties for the map $p$.
\begin{enumerate}
    \item $p$ is a $G$-map: 
    Given $\varphi \in P(X)$ and $g \in G$, we have
    $$g\cdot p(\varphi) = g (\varphi(b), \varphi(b_e)) = (g(\varphi(b), g \varphi(b_e)) = p(g \varphi).$$
    \item $p$ is a map of dg algebras: 
    Given $\varphi, \psi \in P(X)$, we have
    \begin{align*}p(\varphi \star \psi) = & p(\mu_X(\varphi \otimes \psi)\Delta_C) \\
    =&  (\mu_X(\varphi \otimes \psi)\Delta_C (b),\mu_X(\varphi \otimes \psi)\Delta_C (b_e)) \\
    = &(\varphi(b) \psi(b) , \varphi(b_e) \psi(b_e)),
    \end{align*}
    while 
       \begin{align*} \mu_{X \times X} (p(\varphi)  p (\psi) ) = & \mu_{X \times X} ((\varphi(b),\varphi(b_e), (\psi(b), \psi(b_e)) 
       =  (\varphi(b) \psi(b), \varphi(b_e) \psi(b_e)).
    \end{align*}
    \item $p$ is a chain map: Given $\varphi \in P(X)$, we have 
    \begin{align*}
        p \circ d \varphi = & (d \varphi (b), d \varphi (b_e)) \\
        = & ( \varphi \partial_C (b) + \partial_X \varphi(b), \varphi \partial_C (b_e) + \partial_X \varphi(b_e))\\
        = & (\partial_x \varphi (b) \partial_X \varphi(b_e))\\
        =& \partial_{X\times X} (\varphi(b), \varphi(b_e))\\
        =& \partial_{X\times X} \cdot p \varphi.
    \end{align*}
    \item $p$ is a surjective map on fixed points: 
    Given $(x,y) \in X\times X$ we can define a map $\theta \in P(X)$ as a map
    $\theta: C \rightarrow X$
    where $b \mapsto a$, $b_e \mapsto b $, and $ e \mapsto 0.$
    Clearly, $\theta$ is a linear map, hence in $P(X)$, and $p(\theta) = (x,y)$, so $p$ is surjective.
    Now suppose $H$ is a subgroup of $G$ and consider $(x,y) \in X^H \times X^H$. 
    Then, for any $h \in H$, we have $hx= x$ and $hy= y$.
    The goal is to show that the associated $\theta$ is in $P(X)^H$:
    \begin{align*}
        h(\theta(b)) & = h(x)= x = \theta(b),\\
        h(\theta(b_e)) & = h(y)= y = \theta(b_e),\\
        h(\theta(e)) & = h(0)= 0 = \theta(e).\\
    \end{align*}
    Hence $h \theta = \theta$.
\end{enumerate}
Now we check we have the necessary structure and properties for the map $w$.
\begin{enumerate}
    \item $w$ is a $G$-map:
    Given $x \in X$ and $g \in G$, then we have
    $$g \cdot w (x)(e) = g\cdot 0  = 0 \quad \text{and} \quad  w(gx)(e) = 0 , $$
   and similarly for $b_e$.
      $$g \cdot w (x)(b) = g\cdot x  = gx \quad \text{and} \quad  w(gx)(b) = gx $$
    \item $w$ is a map of dg algebras:
     Given $x, y \in X$, then we have the map
     $$w \circ \mu_X(x,y) = w(xy)= w_{xy}.$$
     So for $e \in C$ we have 
     $$w \circ \mu_X(x,y)(e) =  w_{xy}(e) = 0,$$
     on the other hand 
     \begin{align*}
    (w(x) \star w(y)) (e) = &  (w_x \star w_y)(e) \\
    =& \mu_X(w_x \otimes w_y)\Delta_C(e)\\
    =& \mu_X(w_x \otimes w_y)(e \otimes e)\\
    =& \mu_X(w_x \otimes w_y)(e \otimes e)\\
    =& \mu_X(w_x(e) \otimes w_y(e))\\
    =& \mu_X(0 \otimes 0) = 0 .
     \end{align*}
    A similar argument holds for $b_e$. 
    For $b \in C$, we have 
     $$w \circ \mu_X(x,y)(b) =  w_{xy}(b) = xy,$$
     on the other hand,
     \begin{align*}
    (w(x) \star w(y)) (b) = &  (w_x \star w_y)(b) \\
    =& \mu_X(w_x \otimes w_y)\Delta_C(b)\\
    =& \mu_X(w_x \otimes w_y)(b \otimes b)\\
    =& \mu_X(w_x \otimes w_y)(b \otimes b)\\
    =& \mu_X(w_x(b) \otimes w_y(b))\\
    =& \mu_X(x \otimes y) = xy.
     \end{align*}
    Hence $w$ is a map of dg algebras. 
    \item $w$ is a chain map: 
     Given $x \in X$, we have 
     \begin{align*}
     d \circ w(x) (b) = &  w_x \partial_C(b) + \partial_X w_x(b)\\
       =&  w_x (0) + \partial_X (x) = \partial_X (x),
     \end{align*}
     and on the other hand we have 
      \begin{align*}
     w_x \circ \partial_x(x) (b) = & w_{\partial_X (x)}(b)
    = \partial_X (x).
     \end{align*}
    Similarly for $b_e$ we get 
     \begin{align*}
     d \circ w(x) (b_e) = &  w_x \partial_C(b_e) + \partial_X w_x(b_e)\\
       =&  w_x (0) + \partial_X (0) = 0, 
     \end{align*}
     and on the other hand we have 
      \begin{align*}
     w_x \circ \partial_x(x) (b_e) = & w_{\partial_X (x)}(b_e) =  0.
     \end{align*}
     For $e$ we have
       \begin{align*}
     d \circ w(x) (e) = &  w_x \partial_C(e) + \partial_X w_x(e)\\
       =&  w_x (b_e) + \partial_X (0)\\
       =&  w_x(b_e) + 0  = 0 +0 = 0,
     \end{align*}
     and on the other hand we have 
      \begin{align*}
     w_x \circ \partial_x(x) (e) = & w_{\partial_X (x)}(e) =  0.
     \end{align*}
    \item $w$ is a quasi-isomorphism:
    Recall that $C$ is a finite dimensional dg coalgebra which is also contractible, hence the linear dual of $C$, denoted $C^*$, is a finite dimensional dg algebra. 
    Additionally notice that $$P(X) = \operatorname{Hom}_R(C, X) = C^* \otimes X,$$
    and the last dg algebra in this list of equalities is quasi-isomorphic to $X$ via the map $w$.
    %Consider the induced map of $w$ on homology  $$H_*(w): H_*(X) \rightarrow H_*(P(X)),$$ we will show that this map is an isomorphism.  To show that the map is injective, assume that for $\bar{x}$ and $\bar{y}$ in $H_*(X)$ $$H_*(w)(\bar{x}) = H_*(w)(\bar{y}),$$ we will show that $\bar{x} = \bar{y}$.\begin{align*} \bar{x}=& w_{\bar{x}}(b_0)\\=& H_*(w)(\bar{x})(b_0)\\ =& H_*(w)(\bar{y})(b_0)\\=& w_{\bar{y}}(b_0)\\ =& \bar{y} \end{align*}

\end{enumerate}
 
\end{proof}
We have shown that the conditions of Theorem \ref{thm:maru} are satisfied, so we conclude that the adjunction (\ref{adj}) holds true. 
Since $k$ is a field of characteristic zero, then all objects in $\dgA_k^a$ are fibrant \cite{posit}. 
Hence the adjunction (\ref{adj}) can be thought of as an honest right-transfer, as opposed to a fibrant-transfer.
Consequently, $G$-$\dgA_k^a$ admits the fixed point model structure, as desired, sinc the weak equivalences and fibrations given in \ref{thm:maru} in the specified context coincide. 
Note that all arguments hold in the previous proof when $k$ is a ring, yet we would not be able to deduce the previous claim.

Subsequently, we turn our attention to the category of presheaves. 
\begin{proposition}\label{p4.2}
The category $\Og$-$\dgA_R^a$ admits the projective model structure, where a map $\beta :A \rightarrow B$ in  $\Og$-$\dgA_R^a$ is a weak equivalence if $\beta(G/H)$ is a weak equivalence in $\dgA_R^a$and is a fibration if $\beta(G/H)$  is a fibration in $\dgA_R^a$ for all $H < G.$
\end{proposition}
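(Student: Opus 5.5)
The plan is to obtain the projective model structure on $\Og$-$\dgA_R^a = \operatorname{Fun}(\Og^{op}, \dgA_R^a)$ as a right-transferred structure from the product model structure on $\prod_{H \leq G} \dgA_R^a$. The transfer is along the evaluation functors $\operatorname{ev}_{G/H}$, whose left adjoints are $\Og^{op}(G/H,-)\otimes -$. Since $\dgA_R^a$ is cofibrantly generated (indeed combinatorial, as recalled above), the cleanest route is to invoke Theorem 11.6.1 in \cite{hird}, exactly as was done for general $\C$ earlier in this section. Alternatively, one can invoke Proposition A.2.8.2 in \cite{Lur09}, as in Proposition \ref{p3.1.2} for coalgebras. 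Both results assert that for any small category $\mathcal{D}$ and cofibrantly generated (resp. combinatorial) $\C$, the diagram category $\operatorname{Fun}(\mathcal{D},\C)$ carries the projective model structure, with objectwise weak equivalences and fibrations. Taking $\mathcal{D}=\Og^{op}$ and $\C=\dgA_R^a$ then gives the statement, together with the generating sets $I_{\Og^{op}}$ and $J_{\Og^{op}}$ described above.

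The key steps, in order, are as follows.

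\textbf{Step 1.} Record that $\dgA_R^a$ is cofibrantly generated, with the generating cofibrations $R\to T_n(x)$, $R\to S_n(x)$, $S_{n-1}(dx)\to T_n(x)$ and the generating trivial cofibrations $R\to T_n(x)$ recalled from \cite{rr}, adapted to the augmented setting as in \cite{pos}. Also record that $\dgA_R^a$ is locally presentable, so it is combinatorial.

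\textbf{Step 2.} Check the smallness hypothesis. The domains of $I_{\Og^{op}}$ and $J_{\Og^{op}}$ are of the form $\Og^{op}(G/H,-)\otimes R$ or $\Og^{op}(G/H,-)\otimes S_{n-1}(dx)$. By the Yoneda-type adjunction $\operatorname{Hom}(\Og^{op}(G/H,-)\otimes X, F)\cong \operatorname{Hom}_{\dgA_R^a}(X, F(G/H))$, together with the fact that colimits in the functor category are computed objectwise, smallness reduces to smallness of $R$ and $S_{n-1}(dx)$ in $\dgA_R^a$. Both are finitely presented.

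\textbf{Step 3.} Verify the acyclicity condition: relative $J_{\Og^{op}}$-cell complexes must be objectwise quasi-isomorphisms. Evaluated at $G/L$, a pushout of $\Og^{op}(G/H,-)\otimes j$ is a pushout in $\dgA_R^a$ of a coproduct, indexed by the set $(G/L)^H$, of copies of $R\to T_n(x)$. Concretely it is $A\mapsto A\amalg \coprod_{(G/L)^H} T_n(x)$. Since $T_n(x)$ is the free algebra on an acyclic complex, this map is a trivial cofibration in $\dgA_R^a$. Transfinite composites are computed objectwise and hence remain trivial cofibrations, so this step follows from the corresponding fact in $\dgA_R^a$ itself.

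The main obstacle I anticipate is Step 3 when $R$ is a general commutative ring rather than a field. In that case the acyclicity of the free algebra on the cone requires that the coproduct $A\amalg T_n(x)$ still be quasi-isomorphic to $A$, which uses flatness of the generating data. The paper sidesteps this by citing Hirschhorn's theorem, which only needs $\C$ to be cofibrantly generated; I will do the same, so the real content is already contained in the cofibrant generation of $\dgA_R^a$. If a direct argument is wanted, I would first try to verify this for $k$ a field and note that it holds objectwise.
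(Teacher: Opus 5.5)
Your proposal is correct and takes the same route as the paper, which proves this proposition simply by citing Proposition A.2.8.2 of \cite{Lur09}, while Hirschhorn's Theorem 11.6.1 (your primary citation) is equally valid. Steps 2--3 only re-verify what those theorems already establish for any cofibrantly generated $\C$, and the flatness worry is moot: coproducts, pushouts and transfinite composites of the trivial cofibrations $R\to T_n(x)$ are trivial cofibrations by the model axioms in $\dgA_R^a$ itself. One small slip (shared with the paper) is that $\Og^{op}(G/H,G/L)=\Og(G/L,G/H)\cong (G/H)^L$, so the indexing set in Step 3 should be $(G/H)^L$ rather than $(G/L)^H$, which does not affect the argument.
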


Note that this proposition follows from Proposition A.2.8.2 in \cite{Lur09}, since the model category structure on $\dgA_k^a$ considered here satisfies the hypothesis.

Since we did not show that the cellularity conditions were satisfied, we do not conclude this subsection with the analoguous result to Theorem \ref{thm:3.1.3}, yet we were still be able to achieve the fixed point model structure we desired and will still be able to deduce the appropriate equivariant analogue for Koszul duality.

\section{Equivariant Bar and Cobar}

We want to show an adjunction like the one below exists 
$$G\text{-}\Cobar: G\text{-}\dgC_{k}^c \leftrightarrows G\text{-}\dgA_{k}^a: G\text{-}\Barr,$$
when $k$ is a field of characteristic 0, given the model structures considered in this article. 
The goal is to define the functors $G\text{-}\Cobar$ and $ G\text{-}\Barr$ by means of the diagram below and then show that this adjunction forms a Quillen equivalence.
Consider the diagram: 
$$\begin{tikzcd}
\dgC_k^c \arrow[rrr, "\Cobar", shift right=-2] \arrow[dd, "G/H\otimes -"', shift right] &  &  &
\dgA_k^a  \arrow[dd, "G/H\otimes -"', shift right] \arrow[lll, "\Barr", shift right=-2] \\
& & &   \\
G\text{-}\dgC_k^c 
\arrow[uu, "(-)^H"', shift right] \arrow[rrr, "G\text{-}\Cobar", shift right=-2]        &  &  & 
G\text{-}\dgA_k^a.  \arrow[uu, "(-)^H"', shift right]   \arrow[lll, "G\text{-}\Barr", shift right=-2] 
\end{tikzcd}$$
Now we establish that $G\text{-}\Barr_H: = G/H \times - \circ \Barr \circ (-)^H$ and $G\text{-}\Cobar_H: = G/H \times - \circ \Cobar \circ (-)^H$.
It is worth noting that the Cobar functor is a left Quillen functor  so it preserves colimits and that the Bar functor is a right Quillen functor so it preserves limits. 
Recall all the functors are defined for any subgroup $H$ of $G$, so moving forward we will take $G\text{-}\Barr := G\text{-}\Barr_e$ and $G\text{-}\Cobar:= G\text{-}\Cobar_e$ to simplify notation. 

\begin{theorem}
     The functors $(G\text{-}\Cobar, G\text{-}\Barr)$ form a Quillen equivalence.
\end{theorem}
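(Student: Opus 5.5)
The plan is to work levelwise. The composites $G\text{-}\Cobar := G/e \otimes - \circ \Cobar \circ (-)^e$ and $G\text{-}\Barr := G/e\otimes - \circ \Barr \circ (-)^e$ are not a convenient form to handle directly, so first I will show that they agree, up to natural isomorphism, with applying $\Cobar$ and $\Barr$ objectwise to a $G$-object. A $G$-object in $\C$ is a functor from the one-object category $G$ to $\C$. For $X\in G\text{-}\dgC_k^c$, the action of each $g\in G$ is a dg coalgebra automorphism, so functoriality of $\Cobar$ endows $\Cobar(X)$ with a $G$-action by dg algebra automorphisms, and similarly for $\Barr$. Since $\Cobar \dashv \Barr$ is natural in both variables, postcomposition gives an adjunction
$$\operatorname{Fun}(G,\dgC_k^c)\rightleftarrows \operatorname{Fun}(G,\dgA_k^a).$$
A map $\Cobar X\to A$ is $G$-equivariant exactly when its adjoint $X\to \Barr A$ is, because the unit and counit are natural. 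I will take this postcomposed adjunction as the meaning of $(G\text{-}\Cobar, G\text{-}\Barr)$ and identify it with the diagram of the preceding section.

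Next I will show that the adjunction is Quillen for the fixed point model structures of Proposition \ref{p4.3} and of the transferred structure on $G\text{-}\dgA_k^a$ obtained from Theorem \ref{thm:maru}. The cleanest route is on the right adjoint, where I need $(-)^H\circ G\text{-}\Barr \cong \Barr\circ (-)^H$. Since $(-)^H$ is a limit over $H$ and $\Barr$ is a right adjoint, $\Barr$ commutes with these limits. One caveat is that the limit in $\dgC_k^c$ is not the underlying vector space limit. I would handle this with the right adjoint argument rather than a direct computation. Given this commutation, $G\text{-}\Barr$ sends fixed-point fibrations and trivial fibrations to fixed-point fibrations and trivial fibrations, because $\Barr$ is right Quillen. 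Here I use that weak equivalences and fibrations in both equivariant structures are detected on $H$-fixed points.

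For the equivalence itself, I use the criterion that a Quillen pair whose right adjoint reflects weak equivalences between fibrant objects, and whose derived unit is a weak equivalence on cofibrant objects, is a Quillen equivalence. Reflection comes from the commutation above: if $G\text{-}\Barr(f)$ is a weak equivalence, then $\Barr(f^H)$ is a weak equivalence for every $H$. Since $\Cobar\dashv\Barr$ is a Quillen equivalence and every object of $\dgA_k^a$ is fibrant, $\Barr$ reflects weak equivalences, so each $f^H$ is a quasi-isomorphism. For the unit, take $X$ cofibrant; all objects of $\dgC_k^c$ are cofibrant, and since every object of $\dgA_k^a$ is fibrant, no fibrant replacement is needed. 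I then have to show $X^H\to (\Barr\Cobar X)^H$ is a weak equivalence in $\dgC_k^c$. If $(-)^H$ commutes with $\Barr$, this map is the composite $X^H\to \Barr((\Cobar X)^H)$. The remaining gap is comparing $(\Cobar X)^H$ with $\Cobar(X^H)$.

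That comparison is the main obstacle. The functor $(-)^H$ is a limit while $\Cobar$ is a left adjoint, so there is no formal reason for them to commute. My first attempt would be to argue on cellular objects. Every $X$ is a filtered colimit of finite-dimensional subcoalgebras, and the generating cofibrations of the fixed point structure have the form $G/L\otimes i$. Condition 3 of Definition \ref{cellular} for coalgebras (Proposition \ref{p4.3}) and for algebras (Proposition \ref{p4.1}) shows that $(-)^H$ turns $G/L\otimes -$ into $(G/L)^H\otimes -$ on both sides. Because $\Cobar$ preserves coproducts and pushouts, and Proposition \ref{p4.1} gives that $(-)^H$ preserves the relevant pushouts, I would get $(\Cobar X)^H\simeq \Cobar(X^H)$ on cells. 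I would then pass to filtered colimits, using that filtered colimits in $\dgA_k^a$ are computed underlying and quasi-isomorphisms are stable under them. This is exactly the cellularity condition 2 the paper could not verify, so I expect it to need a direct argument. If it fails, a fallback is to verify the unit on the cofibrant generators $G/L\otimes i$ only, and extend by a two-out-of-three and cell-induction argument.
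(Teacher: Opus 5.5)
Your steps up to the unit are sound. You read $G\text{-}\Cobar$ and $G\text{-}\Barr$ levelwise, which is the sensible interpretation (the paper's literal composites $G/e\otimes\Cobar\circ(-)^e$ give $\coprod_G\Cobar$ of the underlying coalgebra, with $G$ permuting summands). $\Barr$ commutes with $(-)^H$ because it is a right adjoint, and the reflection criterion is applied correctly. The genuine gap is the one you flag, and it cannot be closed, because the comparison map is not a quasi-isomorphism in general. The counit $\Cobar\Barr A\to A$ is a quasi-isomorphism, and weak equivalences in $\dgC_k^c$ are the maps sent to quasi-isomorphisms by $\Cobar$. Hence $\eta_X^H$ is a weak equivalence exactly when its adjunct
\[\Cobar(X^H)\longrightarrow(\Cobar X)^H\]
is a quasi-isomorphism. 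Take $G=\mathbb{Z}/2$ and $Y=k\oplus ky$ with $y$ primitive of degree $1$ and $d=0$. Let $X=G/e\otimes Y=k\oplus ky_1\oplus ky_2$ with the swap action; $X$ is cofibrant. Then $X^G=k\oplus k(y_1+y_2)$, so $\Cobar(X^G)=k[z]$. On the other side, $\Cobar X=k\langle x_1,x_2\rangle$ with $x_i=s^{-1}y_i$ is concentrated in degree $0$ with zero differential, so $(\Cobar X)^G$ is the algebra of swap-invariant noncommutative polynomials. The comparison $z\mapsto x_1+x_2$ preserves tensor length. In length $2$ its image is spanned by $(x_1+x_2)^2$, so it misses $x_1^2+x_2^2$. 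Hence $\eta_X^G$ is not a weak equivalence. Granting that the fixed-point structures exist, the levelwise pair is therefore Quillen but not a Quillen equivalence, and no cell induction or two-out-of-three argument can rescue the unit.

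Your cellular repair could not work either, since it rests on cellularity condition~3 on both sides, and the same example violates it. On the coalgebra side, $(G/e\otimes Y)^G=k\oplus k(y_1+y_2)\neq k=(G/e)^G\otimes Y$. On the algebra side, the coproduct in $\dgA_k^a$ is the free product, whose swap-invariants contain $a_1+a_2$ for $a$ in the augmentation ideal, so $(G/e\otimes A)^G\neq k$ as well. The paper's proof gives you nothing to borrow here. It never compares $(\Cobar X)^H$ with $\Cobar(X^H)$. Instead it unwinds the unit to $G/e\otimes\Barr\Cobar(D^e)$ and asserts that $G/e\otimes-$ induces the trivial action, so that fixed points collapse to the non-equivariant unit. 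Since $G/e\otimes-$ freely permutes summands, that reduction is exactly what your analysis shows is unavailable. What your levelwise argument does prove is the Borel statement: if weak equivalences and fibrations are detected only on underlying objects ($H=e$), the comparison map is the identity and the levelwise pair is a Quillen equivalence.
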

\begin{proof}
    We begin by showing that $(G\text{-}\Cobar, G\text{-}\Barr)$ form a Quillen adjunction.
    To do this we will show that $G\text{-}\Barr$ preserves fibrations and acyclic fibrations so consider $\varphi$ and $\tilde{\varphi}$ respectively in $G\text{-}\dgA_k^a$.
    Namely, we will show that $G\text{-}\Barr(\varphi)$ is a fibration and $G\text{-}\Barr(\tilde{\varphi})$ is an acyclic fibration in $G\text{-}\dgC.$
    By definition of the fixed point model structure on $G\text{-} \dgA_k$, $\varphi$ is (acyclic)  fibration if for every subgroup $H$ of $G$ the induced morphism $\varphi^H$ is a (acyclic) fibration in $\dgA_k$.
    The functor $\Barr$ is a right Quillen functor, so it preserves (acyclic) fibrations hence $\Barr(\varphi^H)$ is a fibration in $\dgC^c_k$. 
    Lastly, we need to show that $G/H \otimes \Barr(\varphi^H)$ is a (acyclic) fibration in $G\text{-}\dgC_k^c$, and for this to be true we need $(G/H \otimes \Barr(\varphi^H))^H$ to be a (acyclic) fibration in $\dgC_k^c$.
   This is clear since $G/H\otimes-$ induces the trivial action and $\Barr(\varphi^H)$ is a (acyclic) fibration. 
   With the appropriate modifications indicated a similar argument holds for $\tilde{\varphi}$, so we conclude that $(G\text{-}\Cobar, G\text{-}\Barr)$ form a Quillen adjunction.

   Next, we show that the derived unit and counit of $(G\text{-}\Cobar, G\text{-}\Barr)$ are weak equivalences in the corresponding categories. 
   Let $D \in G\text{-}\dgC_k^c$ be a cofibrant object and consider the unit 
   $$\eta:  G\text{-}\dgC_k^c \xrightarrow[]{G\text{-}\Cobar}   G\text{-}\dgA_k^a \xrightarrow[]{G\text{-}\Barr}  G\text{-}\dgC_k^c ,$$
   where on the component of $D$ we have 
   $$\eta_D: D \mapsto (G\text{-}\Cobar(D))^f\mapsto G\text{-}\Barr( (G\text{-}\Cobar(D))^f) ,$$
   and where $(-)^f$ denotes the fibrant replacement. 
   We want to show that $\eta_D$ is a $\Cobar$-quasi-isomorphism on fixed points and to better understand the functors at play we unravel the symbols in the previous line;
      $$\eta_D: D \mapsto (G/e \otimes \Cobar(D^e))^f\mapsto G/e \otimes \Barr( ( (G/e \otimes \Cobar(D^e))^f)^e) .$$
  Recall that all objects are fibrant in $G\text{-}\dgA_k^a$. 
  % and we are also working in derived categories, so we drop the decoration on the target object of $c$ under $G\text{-}\Cobar$.  
  Having reduced to considering $G/e \otimes \Barr( ( (G/e \otimes \Cobar(D^e)))^e)$, we wish to show when it is $\Cobar$-quasi-isomorphic to $D.$
  Inspecting further we that we are taking fixed points with respect to the trivial action, namely the one induced by $G/e \otimes -,$ hence we can reduce the target object of $\eta_D$ to
   $G/e \otimes \Barr(\Cobar(D^e))$. 
   At this juncture we can utilize the fact that $(\Cobar, \Barr)$ form a Quillen equivalence so the unit is well behaved, so we get that the target object becomes $G/e \otimes (D^e)$. 
   It becomes clearer now that $D$ is $\Cobar$-quasi-isomorphic to $G/e \otimes (D^e)$, since  $G/e \otimes -$ induces the trivial action at which point we are comparing the homology of the fixed points of $D$, the homology of the domain of $\eta_D$, with the homology of the fixed points of $D$, the homology of the codomain of $\eta_D$. 

   To show that the counit is a weak equivalence (quasi-isomorphism) in $G\text{-}\dgA_k^a$ we can apply the same reasoning as with the counit.  
\end{proof}
Lastly we consider the diagram below, 
$$\begin{tikzcd}
G\text{-}\dgC_k^c \arrow[rrr, "G\text{-}\Cobar"', shift right=2] \arrow[dd, "\Phi"', shift right] &  &  &
G\text{-}\dgA_k^a  \arrow[dd, "\Phi'"', shift right] \arrow[lll, "G\text{-}\Barr"', shift right=2] \\
& & &   \\
\Og\text{-}\dgC_k^c 
\arrow[uu, "\Theta"', shift right] \arrow[rrr, "\Og\text{-}\Cobar"', shift right=2]                  &  &  & 
\Og\text{-}\dgA_k^a  \arrow[uu, "\Theta'"', shift right]   \arrow[lll, "\Og\text{-}\Barr"', shift right=2] 
\end{tikzcd}$$
By Theorem \ref{thm:3.1.3} we have that the left pair of arrows forms a Quillen equivalence.

The functors 
$\Og\text{-}\Cobar$ and $\Og\text{-}\Barr$ are defined as follows using post-composition: 
%https://mathoverflow.net/questions/136545/quillen-equivalence-of-diagram-categories this shows you can promote quillen equivalences to diagram categories
let $\underline{C}$ be in $\Og\text{-}\dgC_k^c$
$$\Og\text{-}\Cobar: \Og\text{-}\dgC_k^c \rightarrow \Og \text{-}\dgA_k^a, \qquad \qquad \underline{C} \mapsto \Og\text{-}\Cobar(\underline{C}), $$
where the new object in $\Og$-$\dgA_k^a$ is the functor determined by $\underline{C}$ and $\Cobar$
$$\Og\text{-}\Cobar(\underline{C}) : \Og^{op} \xrightarrow{\underline{C}} \dgA_k^a\xrightarrow{\Cobar} \dgA_k^a,$$
Similarly given $\underline{A} \in \Og$-$\dgA_k^a$ we can define 
$$\Og\text{-}\Barr: \Og\text{-}\dgA_k^a \rightarrow \Og \text{-}\dgC_k^c, \qquad \qquad \underline{A} \mapsto \Og\text{-}\Barr(\underline{A}), $$
where the new object in $\Og$-$\dgC_k^c$ is the functor determined by $\underline{A}$ and $\Barr$
$$\Og\text{-}\Barr(\underline{A}) : \Og^{op} \xrightarrow{\underline{A}} \dgA_k^a\xrightarrow{\Barr} \dgC_k^c.$$

Since the model structures in each presheave category is defined object-wise, it is also not hard to see that $(\Og\text{-}\Cobar, \Og\text{-}\Barr)$ form a  Quillen equivalence as well.

For the vertical pair of functors on right we define them as follows: 
$$ \Phi':  G\text{-}\dgA_k^a \rightarrow \Og\text{-}\dgA_k^a \qquad \qquad A \mapsto \Phi(A) = A(G/e)$$
and 
$$\Theta': \Og\text{-}\dgA_k^a \rightarrow G\text{-}\dgA_k^a \qquad \qquad B \mapsto \Theta(B) = \underline{B}$$
 with $\underline{B}$ being the functor that takes $G/H$ to the conilpotent dg coalgebra $B^H$.
Hence we can conclude the following result: 
\begin{theorem}
     The functors $(\Theta', \Phi')$ form a Quillen equivalence.
\end{theorem}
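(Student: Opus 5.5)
We prove the statement by imitating the Elmendorf-type argument behind Theorem \ref{thm:3.1.3}, now for augmented dg algebras. Throughout, $\Theta'(B)$ is read as the evaluation $B(G/e)$ with its induced $G$-action, since $\operatorname{Aut}(G/e)\cong G$ acts on it. The right adjoint $\Phi'$ sends a $G$-algebra $A$ to its fixed point system $G/H\mapsto A^H$. With this reading, $\Theta'$ is left adjoint to $\Phi'$. The adjunction itself comes from the Yoneda lemma together with $\operatorname{Hom}_{G}(G/H,-)\cong(-)^H$. Concretely, a map of diagrams $B\to\Phi'(A)$ is determined by its value at $G/e$, because $A^H\subset A$ and the components must be compatible with the restriction maps.

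\textbf{Quillen adjunction.} We check that $\Phi'$ preserves fibrations and acyclic fibrations. The model structure on $G$-$\dgA_k^a$ obtained from Theorem \ref{thm:maru} is the fixed point structure. The structure on $\Og$-$\dgA_k^a$ is the projective one of Proposition \ref{p4.2}. A map $f$ in $G$-$\dgA_k^a$ is a (acyclic) fibration exactly when every $f^H$ is one. This is precisely the condition that $\Phi'(f)$ is an objectwise (acyclic) fibration. So this step is a tautology once the definitions are unwound.

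\textbf{Derived unit and counit.} Every object of $\dgA_k^a$ is fibrant, so all objects of both diagram categories are fibrant. The counit condition therefore reduces to checking that $\Theta'\Phi'(A)=A^{e}=A\to A$ is a weak equivalence, and here it is the identity. For the unit, take a cofibrant $B$ in $\Og$-$\dgA_k^a$; we need $B(G/H)\to B(G/e)^H$ to be a quasi-isomorphism, in fact an isomorphism, for all $H$. We argue by cellular induction on $B$. Cofibrant objects are retracts of transfinite composites of pushouts of maps in $I_{\Og^{op}}$. For a generating cell $\Og^{op}(G/L,-)\otimes i$, evaluation at $G/e$ gives $G/L\otimes i$. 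Its $H$-fixed points are $(G/L)^H\otimes i$ by condition 3 of Definition \ref{cellular}, which Proposition \ref{p4.1} establishes. Condition 1, also from Proposition \ref{p4.1}, shows that $(-)^H$ commutes with the relevant cell pushouts. Retracts are harmless.

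\textbf{Main obstacle.} The transfinite composition step needs $(-)^H$ to commute with filtered colimits, which is condition 2; the paper explicitly could not verify it. We would attack it directly. Filtered colimits in $\dgA_k^a$ are computed on underlying chain complexes. Taking $H$-fixed points is a limit, namely an equalizer of the maps $h-\mathrm{id}$ over $h\in H$. For finite $H$ this is a finite limit and commutes with filtered colimits of vector spaces. For general $H$ we would use that on underlying vector spaces the fixed points are the kernel of $A\to\prod_{h\in H}A$. This kernel commutes with filtered colimits in the cellular situation, since there each stage is a coproduct of cells indexed by $G$-sets and the $H$-action permutes the tensor factors. If this fails for infinite $H$, we would restrict the statement to discrete groups with finite (or finitely many relevant) subgroups, which is the setting in which the argument closes.
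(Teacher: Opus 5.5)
Your reading of $\Theta'$ as evaluation at $G/e$ and of $\Phi'$ as the fixed point diagram is the coherent one, and your Quillen adjunction step is fine. Your route also differs from the paper's: you do a direct cellular Elmendorf-type induction. The proof breaks in the unit step, where you compute fixed points of a generating cell via condition 3 of Definition \ref{cellular}, citing Proposition \ref{p4.1}. That condition is false in $\dgA_k^a$, and with it the statement itself.

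Two things go wrong in Proposition \ref{p4.1}. The coproduct of augmented associative dg algebras is the free product, not the tensor product used there. And fixed points of a permuted coproduct contain orbit sums, which happens already for tensor products.

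Concretely, let $G$ be a nontrivial finite group and $B=\Og^{op}(G/e,-)\otimes S_n(x)$. It is cofibrant, being the target of a map in $I_{\Og^{op}}$ with initial source.
\begin{itemize}
\item Since $(G/e)^G=\emptyset$, we get $B(G/G)=\coprod_{(G/e)^G}S_n(x)=k$.
\item On the other side, $\Theta'B=G/e\otimes S_n(x)=k\langle x_g : g\in G\rangle$, with zero differential and $G$ permuting the generators. So $B(G/e)^G$ contains the cycle $\sum_{g\in G}x_g\notin k$.
\item Every object of $G$-$\dgA_k^a$ is fibrant. Hence the derived unit at $B$, evaluated at $G/G$, is the inclusion $k\to k\langle x_g : g\in G\rangle^G$, which is not a quasi-isomorphism.
\end{itemize}
So the unit already fails on a single cell, and no induction can repair it. In fact $(\Theta',\Phi')$ is not a Quillen equivalence. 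The same argument works for any $G$ with a nontrivial finite subgroup $H$, evaluating at $G/H$. This is the same orbit-sum phenomenon that defeats the Elmendorf comparison for $G$-chain complexes.

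It follows that the obstacle you single out is not the real one. Condition 2 is harmless for the colimits that occur. The transition maps $A\to A\ast(\cdots)$ in a relative cell complex are injective, being split by the augmentation. Fixed points commute with filtered colimits along injections for any discrete $H$. Your fallback of restricting to finite groups therefore goes the wrong way, since finite subgroups are exactly what produce the orbit sums.

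A smaller point: your counit step ignores the cofibrant replacement of $\Phi'(A)$. This is harmless, because $\Phi'$ creates weak equivalences, so checking the derived unit on cofibrant objects suffices.

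The paper's proof is different and purely formal. It never examines cells; it deduces the claim by three-out-of-four from the other sides of the $\Cobar$/$\Barr$ square. That route additionally needs the square to commute up to natural weak equivalence, which is not checked. It also cannot evade the counterexample, and its input Theorem \ref{thm:3.1.3} meets the same obstruction. Take $C=k\oplus kx$ with $x$ primitive. The element $\sum_g x_g$ is primitive and $G$-fixed, so $k\oplus k\sum_g x_g$ is a $G$-fixed subcoalgebra of $G/e\otimes C$. Its $\Cobar$ is a polynomial algebra on one generator with zero differential, so it is not weakly equivalent to $k=(G/e)^G\otimes C$.
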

\begin{proof}
We deduce the pair $(\Theta', \Phi')$ form  a Quillen adjunction as they are defined in terms of the appropriate Quillen functors. 
Furthermore given that three pairs of Quillen functors each for a Quillen equivalence, we deduce the the fourth pair form a Quillen equivalence. 
\end{proof}

\newpage
\printbibliography

@article{rr,
  title={The simplicial coalgebra of chains under three different notions of weak equivalence},
  author={Raptis, George and Rivera, Manuel},
  journal={arXiv preprint arXiv:2206.06335},
  year={2022}
}

@article{stephan,
  title={On equivariant homotopy theory for model categories},
  author={Stephan, Marc},
  journal={arXiv preprint arXiv:1308.0856},
  year={2013}
}

@article{gcat,
  title={A model structure on GCat},
  author={Bohmann, Anna Marie and Mazur, Kristen and Osorno, Ang{\'e}lica M and Ozornova, Viktoriya and Ponto, Kate and Yarnall, Carolyn},
  journal={Women in Topology: Collaborations in Homotopy Theory},
  volume={641},
  pages={123--134},
  year={2015}
}

@article{elm,
  title={Systems of fixed point sets},
  author={Elmendorf, Anthony D},
  journal={Transactions of the American Mathematical Society},
  volume={277},
  number={1},
  pages={275--284},
  year={1983}
}

@book{hird,
  title={Model categories and their localizations},
  author={Hirschhorn, Philip S},
  number={99},
  year={2003},
  publisher={American Mathematical Soc.}
}

@article{sullivan1977,
  title={Infinitesimal computations in topology},
  author={Sullivan, Dennis},
  journal={Publications Math{\'e}matiques de l'IH{\'E}S},
  volume={47},
  pages={269--331},
  year={1977}
}

@article{ginz,
  title={Koszul duality for operads},
  author={Ginzburg, Victor and Kapranov, Mikhail},
  journal={Duke Math. J.},
  pages={203–272},
  year={1994}
}

@book{Lur09,
  title={Higher topos theory},
  author={Lurie, Jacob},
  year={2009},
  publisher={Princeton University Press}
}

@inproceedings{Kape,
  title={On DG-modules over the de Rham complex and the vanishing cycles functor},
  author={Bloch, Spencer and Dolgachev, Igor V and Fulton, William and Kapranov, MM},
  booktitle={Algebraic Geometry: Proceedings of the US-USSR Symposium held in Chicago, June 20--July 14, 1989},
  pages={57--86},
  year={1991},
  organization={Springer}
}

@article{posit,
  title={Differential graded Koszul duality: An introductory survey},
  author={Positselski, Leonid},
  journal={Bulletin of the London Mathematical Society},
  volume={55},
  number={4},
  pages={1551--1640},
  year={2023},
  publisher={Wiley Online Library}
}

@article{mirk,
  title={Linear koszul duality},
  author={Mirkovi{\'c}, Ivan and Riche, Simon},
  journal={Compositio Mathematica},
  volume={146},
  number={1},
  pages={233--258},
  year={2010},
  publisher={London Mathematical Society}
}

@book{pos,
  title={Two kinds of derived categories, Koszul duality, and comodule-contramodule correspondence},
  author={Positselski, L.},
  volume={212},
  number={996},
  year={2011},
  publisher={American Mathematical Society}
}

@article{Maru,
  title={Fibrantly-induced model structures},
  author={Guetta, L{\'e}onard and Moser, Lyne and Sarazola, Maru and Verdugo, Paula},
  journal={arXiv preprint arXiv:2301.07801},
  year={2023}
}

@article{Jar97,
  title={A closed model structure for differential graded algebras},
  author={Jardine, J.F.},
  journal={Fields Institute Communications},
  volume={17},
pages={55--58},
  year={1997}
}

@article{MR,
  title={Equivariant homotopy theory via simplicial coalgebras},
  author={Mart{\'\i}nez Alberga, Sof{\'\i}a and Rivera, Manuel},
  journal={Proceedings of the American Mathematical Society},
  volume={153},
  number={10},
  pages={4517--4534},
  year={2025}
}

@article {RR22,
    AUTHOR = {Raptis, George and Rivera, Manuel},
     TITLE = {The {S}implicial {C}oalgebra of {C}hains {U}nder {T}hree
              {D}ifferent {N}otions of {W}eak {E}quivalence},
   JOURNAL = {Int. Math. Res. Not. IMRN},
  FJOURNAL = {International Mathematics Research Notices. IMRN},
      YEAR = {2024},
    NUMBER = {16},
     PAGES = {11766--11811},
      ISSN = {1073-7928},
   MRCLASS = {Prelim},
  MRNUMBER = {4789098},
       DOI = {10.1093/imrn/rnae031},
       URL = {https://doi.org/10.1093/imrn/rnae031},
}

@article{goerss1995simplicial,
  title={Simplicial chains over a field and p-local homotopy theory},
  author={Goerss, Paul G},
  journal={Mathematische Zeitschrift},
  volume={220},
  number={1},
  pages={523--544},
  year={1995},
  publisher={Springer}
}

@article{scull,
  title={A model category structure for equivariant algebraic models},
  author={Scull, Laura},
  journal={Transactions of the American Mathematical Society},
  volume={360},
  number={5},
  pages={2505--2525},
  year={2008}
}

@article{bergner,
  title={Equivalence of models for equivariant $(\infty, 1)$-categories},
  author={Bergner, Julia E},
  journal={Glasgow Mathematical Journal},
  volume={59},
  number={1},
  pages={237--253},
  year={2017},
  publisher={Cambridge University Press}
}

@article{88,
  title={Resolutions of unbounded complexes},
  author={Spaltenstein, Nicolas},
  journal={Compositio Mathematica},
  volume={65},
  number={2},
  pages={121--154},
  year={1988}
}

@book{93,
  title={Hopf algebras and their actions on rings},
  author={Montgomery, Susan},
  number={82},
  year={1993},
  publisher={American Mathematical Soc.}
}

@article{21,
  title={Contramodules},
  author={Positselski, Leonid},
  journal={Confluentes Mathematici},
  volume={13},
  number={2},
  pages={93--182},
  year={2021}
}

@article{beilinson1988koszul,
  title={Koszul duality},
  author={Beilinson, Alexander A and Ginsburg, Victor A and Schechtman, Vadim V},
  journal={Journal of geometry and physics},
  volume={5},
  number={3},
  pages={317--350},
  year={1988},
  publisher={Elsevier}
}

@article{hinich,
  title={DG coalgebras as formal stacks},
  author={Hinich, Vladimir},
  journal={Journal of pure and applied algebra},
  volume={162},
  number={2-3},
  pages={209--250},
  year={2001},
  publisher={Elsevier}
}

@book{MM02,
  title={Equivariant orthogonal spectra and $ S $-modules},
  author={Mandell, Michael A and May, J Peter},
  year={2002},
  publisher={American Mathematical Soc.}
}
\end{document}